\numberwithin{equation}{subsection}
\theoremstyle{plain}
\newtheorem{thm}[equation]{Theorem}
\newtheorem{prop}[equation]{Proposition}
\newtheorem{lem}[equation]{Lemma}
\newtheorem{conj}[equation]{Conjecture}
\DeclareSymbolFont{cyrletters}{OT2}{wncyr}{m}{n}
\DeclareMathSymbol{\Sha}{\mathalpha}{cyrletters}{"58}
\theoremstyle{remark}
\newtheorem{remark}[equation]{Remark}
\newtheorem{rmk}[equation]{Remark}
\DeclareMathOperator{\Gal}{Gal}
\DeclareMathOperator{\MAut}{MAut}
\DeclareMathOperator{\rk}{rk}
\DeclareMathOperator{\hht}{ht}
\DeclareMathOperator{\sgn}{sign}
\DeclareMathOperator{\Tr}{Tr}
\DeclareMathOperator{\PGL}{PGL}
\DeclareMathOperator{\SL}{SL}
\DeclareMathOperator{\Nm}{Nm}
\DeclareMathOperator{\Frac}{Frac}
\DeclareMathOperator{\End}{End}
\newcommand{\C}{\mathbb C}
\newcommand{\F}{\mathbb F}
\newcommand{\PP}{\mathbb P}
\newcommand{\Q}{\mathbb Q}
\newcommand{\Qbar}{\overline{\mathbb Q}}
\newcommand{\Z}{\mathbb Z}
\newcommand{\frakp}{\mathfrak{p}}
\newcommand{\frakH}{\mathfrak{H}}
\newcommand{\la}{\langle}
\newcommand{\ra}{\rangle}
\newcommand{\scrO}{\mathscr{O}}
\newcommand{\pdot}{j}
\newcommand{\psmod}[1]{~(\textup{\text{mod}}~{#1})}
\begin{document}

\title{Sylvester's problem and mock Heegner points}
\author{Samit Dasgupta and John Voight}
\date{\today}

\subjclass[2010]{11D25, 11G05, 11G40, 11G15}

\begin{abstract}
We prove that if $p \equiv 4,7 \pmod{9}$ is prime and $3$ is not a cube modulo $p$, then both of the equations $x^3+y^3=p$ and $x^3+y^3=p^2$ have a solution with $x,y \in \Q$.
\end{abstract}

\maketitle

\section{Introduction}

\subsection{Motivation}

We begin with the classical Diophantine question: which integers $n$ can be written as the sum of two cubes of rational numbers?  More precisely, let $n \in \Z_{>0}$ be cubefree, and let $E_n$ denote the projective plane curve defined by the equation $x^3+y^3=nz^3$.  Equipped with the point $\infty=(1:-1:0)$, the curve $E_n$ has the structure of an elliptic curve over $\Q$.  (The equation for $E_n$ can be transformed via a change of variables to yield the Weierstrass equation $y^2=x^3-432n^2$.)  We have $E_1(\Q) \simeq \Z/3\Z$ generated by $(1:0:1)$ and $E_2(\Q) \simeq \Z/2\Z$ generated by $(1:1:1)$; otherwise, $E_n(\Q)_{\text{tors}}=\{\infty\}$ for $n \geq 3$.  So our question becomes: for which cubefree integers $n \geq 3$ is $\rk E_n(\Q)>0$?

A conjecture, attributed to Sylvester, suggests an answer to this question when $n=p$ is prime.

\begin{conj}[Sylvester \cite{Sylvester}, Selmer \cite{Selmer}] \label{conj:sylvester}
If $p \equiv 4, 7, 8 \pmod{9}$, then $\rk E_p(\Q)>0$.
\end{conj}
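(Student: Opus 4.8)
The plan is to produce an explicit point of infinite order in $E_p(\Q)$ by a Heegner-point construction adapted to the complex multiplication of $E_p$, and to certify its non-triviality through a Gross--Zagier formula together with an analytic non-vanishing input. Throughout I work with $K = \Q(\sqrt{-3})$, over which $E_p$ acquires complex multiplication by an order in $\calO_K$; its Hasse--Weil $L$-function coincides with the Hecke $L$-function $L(\psi,s)$ of a Hecke character $\psi$ of $K$ of infinity type $(1,0)$ and conductor supported at $3$ and $p$.

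First I would pin down the sign of the functional equation. Computing the local root numbers of $\psi$ at $3$, at $p$, and at infinity, I expect to find that the global root number equals $-1$ exactly when $p \equiv 4, 7, 8 \pmod 9$, so that $L(E_p,s)$ vanishes to odd order at $s=1$. This already yields $\rk E_p(\Q) > 0$ under the conjecture of Birch and Swinnerton-Dyer, but to obtain an unconditional statement I must actually exhibit the point.

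The second and central step is the construction of the point. Because $3$ ramifies in $K$, the primes dividing the conductor do not all split in $K$ and the classical Heegner hypothesis fails; I therefore replace genuine Heegner points by \emph{mock Heegner points}, that is, images of CM points for a carefully chosen non-maximal order (or on an auxiliary modular curve) that, after tracing down a suitable ring class field, land in $E_p(\Q)$. By a Gross--Zagier formula in this setting, the canonical height $\widehat{h}(P)$ of the resulting point $P$ is a nonzero explicit multiple of $L'(E_p,1)$, so that $P$ has infinite order precisely when $L'(E_p,1) \neq 0$, equivalently when the odd analytic rank equals $1$.

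The main obstacle is thus the analytic non-vanishing $L'(E_p,1) \neq 0$, and it is here that the three residue classes diverge. For $p \equiv 4, 7 \pmod 9$ I would express $L'(E_p,1)$, or equivalently the trace of the mock Heegner point, via a Waldspurger-type identity in terms of a Fourier coefficient of a weight $3/2$ form, and show that the hypothesis that $3$ is not a cube modulo $p$ forces this coefficient to be nonzero by a congruence and reciprocity argument. Removing that auxiliary hypothesis, and especially treating $p \equiv 8 \pmod 9$ --- where the relevant mock Heegner construction degenerates and no comparably clean non-vanishing criterion is presently available --- is the hard part, and I expect it to require either a genuinely different point construction or a new analytic non-vanishing theorem for the associated family of central derivatives.
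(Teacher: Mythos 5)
The statement you are trying to prove is Conjecture~\ref{conj:sylvester}, which remains an open conjecture: the paper does not prove it, and neither does your proposal. What the paper establishes is only the partial result Theorem~\ref{t:main} ($p \equiv 4,7 \pmod 9$ \emph{and} $3$ not a cube modulo $p$), and your outline, even if every step were carried out, would deliver at most that same partial statement. Your final paragraph concedes exactly this: you acknowledge that removing the auxiliary cube condition, and especially the case $p \equiv 8 \pmod 9$, would require ``a genuinely different point construction or a new analytic non-vanishing theorem.'' That is a research program, not a proof; as an argument for the conjecture as stated, the gap is not a fixable detail but the entire remaining content of the problem (in particular, nothing at all is known for $p \equiv 8 \pmod 9$).

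Even as a route to the partial result, your certification of nontorsion is incorrect as stated, in a way that matters. You claim a Gross--Zagier formula in the mock Heegner setting makes $\hht(P)$ an explicit nonzero multiple of $L'(E_p,1)$, so that $P$ is nontorsion precisely when $L'(E_p,1) \neq 0$. The formula that actually holds in this ramified situation (Shu--Yin, Theorem~\ref{t:sy}) computes the height against the twisted value $L'(E_9/K,\chi_{3p},1) = L'(E_p/\Q,1)\,L(E_{3p^2}/\Q,1)$, so the constructed point is torsion whenever $L(E_{3p^2}/\Q,1) = 0$ --- which does happen (e.g.\ $p = 61, 193$ in the paper's tables) even though the conjecture predicts rank $1$ there. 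Your analytic non-vanishing step would therefore have to target $L(E_{3p^2}/\Q,1)$ as well, and the known link from ``$3$ is not a cube modulo $p$'' to that non-vanishing (Satg\'e's triviality of a $3$-isogeny Selmer group, giving finiteness of $E_{3p^2}(\Q)$) passes through BSD, i.e.\ is conditional. This is precisely why the paper does \emph{not} argue through $L$-values at all: it proves nontorsion unconditionally by a purely arithmetic argument, reducing the point modulo $p$ and using an $\eta$-product congruence of Kronecker type (Proposition~\ref{shim2}) to show the reduction cannot equal that of any torsion point (Proposition~\ref{prop:3notpmainprop}). Without replacing your Gross--Zagier-plus-non-vanishing step by something of this unconditional nature, your sketch does not prove even Theorem~\ref{t:main}, let alone the conjecture.
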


An explicit 3-descent \cite{Satge} shows that
\begin{equation} \label{e:upper}
\rk E_p(\Q) \le 
\begin{cases}
0, & \text{if $p \equiv 2, 5\phantom{, 9} \pmod{9}$;}  \\
1, & \text{if $p \equiv 4, 7, 8 \pmod{9}$;}  \\
2, & \text{if $p \equiv 1\phantom{, 9, 9} \pmod{9}$.} 
\end{cases} 
\end{equation}
In particular, primes $p \equiv 2,5 \pmod{9}$ are \emph{not} the sum of two cubes, a statement that can be traced back to P\'epin, Lucas, and Sylvester \cite[Section 2, Title 1]{Sylvester}.  

At the same time, the sign of the functional equation for the $L$-series of $E_p$ is
\begin{equation} \label{e:sign} \sgn(L(E_p/\Q, s)) = 
\begin{cases} -1, & \text{if $p\equiv 4,7,8 \pmod{9}$;}  \\  
+1, & \text{otherwise.} \\ 
\end{cases}  
\end{equation}
Putting these together, for $p \equiv 1 \pmod{9}$, the Birch--Swinnerton-Dyer (BSD) conjecture predicts that $\rk E_p(\Q)=0\text{ or }2$, depending on $p$ in a nontrivial way.  This case was investigated by Rodriguez-Villegas and Zagier \cite{RVZ}: they give three methods to determine for a given prime $p$ whether or not $\rk E_p(\Q)=0$.

\subsection{Main result}

We are left to consider the cases $p \equiv 4,7,8 \pmod{9}$.  The BSD conjecture together with (\ref{e:upper}) and (\ref{e:sign}) then predicts that $\rk E_p(\Q)=1$, and hence that $p$ is the sum of two cubes.  In this article, we prove the following (unconditional) result as progress towards Sylvester's conjecture.

\begin{thm} \label{t:main}
Let $p \equiv 4,7\pmod{9}$ be prime and suppose that $3$ is not a cube modulo $p$.  Then $\rk E_p(\Q)=\rk E_{p^2}(\Q)=1$.
\end{thm}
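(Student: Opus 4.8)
The plan is to produce, for each prime $p \equiv 4,7 \pmod 9$ with $3$ a non-cube mod $p$, an explicit non-torsion point on $E_p(\Q)$ and on $E_{p^2}(\Q)$; combined with the upper bound $\rk E_p(\Q) \le 1$ (and similarly for $E_{p^2}$) from the 3-descent in \eqref{e:upper}, this forces the rank to be exactly $1$ in both cases. Since the curves $E_p$ have CM by $\Z[\zeta_3]$ and are quotients of Fermat-type curves, the natural source of points is the theory of Heegner points: the curve $E_p$ is (isogenous to) a quotient of a modular curve $X_0(N)$ for a suitable level $N$ divisible by $p$ (or by $3$), and we seek CM points whose traces land in $E_p(\Q)$. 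However, the $L$-function has sign $-1$, so the relevant construction is not the classical Gross--Zagier Heegner point on $X_0(N)$ itself (which would be forced to be torsion by the wrong functional-equation parity); instead one uses the \emph{mock Heegner points} of the title — points arising from CM divisors on a related modular or Shimura curve, after a careful choice of orientation and local conditions at $3$ and $p$, which is precisely where the hypothesis ``$3$ is not a cube mod $p$'' enters.

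First I would set up the CM theory: fix $K = \Q(\sqrt{-3})$, identify $E_p$ with an elliptic curve with CM by $\calO_K$, and locate a modular parametrization $\varphi \colon X \to E_p$ where $X$ is a modular curve of level involving $p$ and $3$. Second, I would construct a CM point $P$ on $X$ attached to an order in $K$ (or in the relevant quaternion algebra) chosen so that the field of definition of $\varphi(P)$ is an abelian extension $H$ of $K$ with $\Gal(H/K)$ of exponent $3$ — the cubing phenomenon reflecting the $3$-isogeny structure. Third, I would take an appropriate trace/twist of $\varphi(P)$ down to $\Q$: the subtlety is that the naive trace vanishes, and one must instead pair against a cubic character, producing a point in a $3$-isotypic piece; the condition that $3$ is not a cube mod $p$ guarantees that the corresponding character sum / local term at $3$ is a unit rather than zero, so the resulting point is non-torsion. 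Fourth, I would verify non-triviality: either via a Gross--Zagier-style height formula relating the canonical height of the point to $L'(E_p,1) \ne 0$ (using that the sign is $-1$ so the derivative is the first nonvanishing term, together with a nonvanishing result for the relevant twisted $L$-value), or — more in the spirit of an unconditional elementary argument — by an explicit $3$-adic or archimedean estimate showing the point cannot be in the finite torsion subgroup. Finally, I would transfer the result to $E_{p^2}$: since $E_{p^2}$ is the $\sqrt[3]{\;}$-twist of $E_p$ (they become isomorphic over $\Q(\sqrt[3]{p})$, equivalently there is a $3$-isogeny relationship after base change), the same CM point construction — or a direct $3$-isogeny / cubic twist argument — yields a non-torsion point on $E_{p^2}(\Q)$ as well, and again the 3-descent bound pins the rank at $1$.

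The main obstacle, I expect, is the construction and \emph{non-vanishing} of the mock Heegner point — specifically, identifying the correct modular or Shimura curve and the correct CM point together with the local orientation conditions at $3$ and at $p$ so that (a) the trace to $\Q$ lands in the right elliptic curve rather than a higher-dimensional quotient or a wrong twist, and (b) the non-vanishing is controlled exactly by the cubic residue symbol $\left(\tfrac{3}{p}\right)_3$. Classical Heegner point constructions give parity obstructions that would make the point torsion here; circumventing this is the heart of the matter, and it is presumably why the hypothesis is ``$3$ is not a cube mod $p$'' rather than the full Sylvester range $p \equiv 4,7,8 \pmod 9$ — the case $p \equiv 8 \pmod 9$ and the case where $3$ is a cube would require a different or refined point construction. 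A secondary technical point is making the argument fully unconditional: avoiding any appeal to BSD means the non-triviality of the point must be established directly (via an explicit height computation or an explicit descent certificate), rather than inferred from $L'(E_p,1)\neq 0$.
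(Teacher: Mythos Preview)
Your broad outline---mock Heegner points over $K=\Q(\sqrt{-3})$, cubic twisting, trace to $K$, and nonvanishing governed by the cubic residue symbol---matches the paper's strategy. But several concrete points are either wrong or miss the ideas that make the argument work.

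First, you have the parity backwards. You write that the classical Gross--Zagier Heegner point ``would be forced to be torsion by the wrong functional-equation parity''; in fact sign $-1$ is exactly the case where Heegner points are expected to be nontorsion. The paper's reason for abandoning classical Heegner points is different: the natural CM field $K=\Q(\sqrt{-3})$ has $3$ ramified, so the Heegner hypothesis fails, and for any other $K$ there is no way to prove unconditionally that the point is nontorsion.

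Second, the decisive structural idea is absent from your sketch. The paper does \emph{not} use a parametrization $X_0(N)\to E_p$ with level depending on $p$; it uses a single fixed map $\Phi\colon X_0(243)\to E_9$ (the curve $E_9$, not $E_p$), and then feeds in CM points of conductor $9p$ on $X_0(243)$. The descent from $E_9(H_{9p})$ to $E_1(H_{3p})$, and then to $E_p(K)$ or $E_{p^2}(K)$, is carried out by matching the Galois action (computed via Shimura reciprocity) with the action of the \emph{exotic modular automorphism group} $\MAut(X_0(243))\simeq S_3\times\Z/3\Z$. This comparison is the engine of the whole construction; your proposal has no analogue of it, and without it the twisted trace you describe would acquire extra factors of $3$ that kill the nontorsion argument.

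Third, your plan for proving nontriviality---a Gross--Zagier height formula, or an archimedean/$3$-adic estimate---is not what the paper does and would not give an unconditional result here. The paper instead reduces the point modulo the primes above $p$ in $L=K(\sqrt[3]{p})$. Using an explicit $\eta$-product expression for $x(P)$ and a Kronecker-type congruence $f(\tau)\equiv f(p\tau)^p\pmod{p}$, one computes the image of $x(Q)^p$ in $\F_p\times\F_p$ exactly; when $3$ is not a cube mod $p$ this image is $(u,1)$ or $(1,u)$ with $u$ a primitive cube root of unity, which matches no torsion point of $E_1(L)$.

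Finally, the passage to $E_{p^2}$ is not obtained from $E_p$ by a twist or isogeny argument as you suggest; the two cases are handled in parallel from the outset by choosing two different cyclic $243$-isogenies (``case 1'' and ``case 2''), each of which, after the same descent-and-trace machinery, lands on $E_p(K)$ or $E_{p^2}(K)$ respectively.
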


In 1994, Elkies \cite{Elkies} announced a proof of the stronger statement that the conclusion of Theorem~\ref{t:main} holds for \emph{all} $p \equiv 4,7 \pmod{9}$.  The details of the proof have not been published, but his methods differ substantially from ours \cite{ElkiesWWW}.

Theorem \ref{t:main} was announced and the proof sketched in earlier work \cite{DV}, but several important details were not provided and are finally given here.  The construction in this paper has been recently used by Shu--Yin \cite{ShuYin} to prove that the power of $3$ dividing $\#\Sha(E_p)\#\Sha(E_{3p^2})$ is as predicted by the BSD conjecture, following a method similar to the work of Cai--Shu--Tian \cite{CST15}.  (See also section 1.4 below.)
We are not aware of any results concerning the case $p \equiv 8 \pmod{9}$ of Conjecture \ref{conj:sylvester}, which appears to be decidedly more difficult.

\subsection{Sketch of the proof}

We now discuss the proof of Theorem~\ref{t:main}.  General philosophy predicts that in the situation where the curve $E_p$ has expected rank $1$, one should be able to construct rational nontorsion points on $E_p$ using the theory of complex multiplication (CM).  One might first consider the classical method of Heegner points.  We start with the modular parameterization $\Phi:X_0(N) \to E_p$, where $N$ is the conductor of $E_p$, given by
\[ N=\begin{cases} 27p^2, & \text{if $p \equiv 4\pmod{9}$,} \\
9p^2, & \text{if $p \equiv 7\pmod{9}$.} 
\end{cases} \]
Given a quadratic imaginary field $K$ that satisfies the \emph{Heegner hypothesis} that both $3$ and $p$ are split, we may define a cyclic $N$-isogeny that yields a point $P \in X_0(N)(H)$, where $H$ denotes the Hilbert class field of $K$.  The trace $Y=\Tr_{H/K} \Phi(P)$ yields a point on $E_p(K)$.  By the Gross-Zagier formula \cite{GZ}, we expect this point to be nontorsion.  Indeed, the BSD conjecture (which in particular furnishes an equality of the algebraic and analytic ranks of $E_p$) implies that this is the case.  But in order to apply this method, we must first choose a suitable imaginary quadratic field $K$, and no natural candidate for $K$ presents itself; after making such a choice, it is unclear how to prove unconditionally that the resulting Heegner points are nontorsion.  

Instead, in this article we work with what are known as \emph{mock Heegner points}.  This terminology is due to Monsky \cite[p.~46]{Monsky}, although arguably Heegner's original construction may be described as an example of such ``mock'' Heegner points.  We consider the field $K = \Q(\sqrt{-3}) = \Q(\omega)$, where $\omega = \exp(2\pi i /3)$ is a primitive cube root of unity.  Note that the elliptic curve $E_n$ has CM by the ring of integers $\Z_K = \Z[\omega]$, and that the prime $3$ is ramified in $K$, so the Heegner hypothesis is not satisfied.  Nevertheless, Heegner-like constructions of points defined by CM theory may still produce nontorsion points in certain situations: for example, one can show that results of Satg\'e  \cite{Satge} concerning the curve $x^3+y^3=2p$ can be described in the framework of mock Heegner points \cite{DV}.

We take instead a fixed modular parametrization $X_0(243) \to E_9$.  We  consider an explicit cyclic $243$-isogeny of conductor $9p$ which under this parameterization yields a point $P \in E_9(H_{9p})$, where $H_{9p}$ denotes the ring class field of $K$ associated to the conductor $9p$.  We descend the point $P \in E_9(H_{9p})$ with a twist by $\sqrt[3]{3}$ to a point $Q \in E_1(H_{3p})$.   This descent argument is particularly appealing and non-standard because it compares the action of the exotic modular automorphism group of $X_0(243)$ as studied by Ogg \cite{Ogg} to the Galois action on CM points provided by the Shimura Reciprocity Law.

We next consider the trace $R=\Tr_{H_{3p}/L} Q \in E_1(L)$, where $L=K(\sqrt[3]{p})$.   We  show that after translating by an explicit torsion point, $R$ twists to yield a point $Z \in E_p(K)$ or $Z \in E_{p^2}(K)$, depending on the original choice of $243$-isogeny.  Again this argument employs the group of exotic modular automorphisms of $X_0(243)$.

We conclude by showing that the point $R$ (hence $Z$) is nontorsion when 3 is not a cube modulo $p$, and this implies the theorem since $\rk_{\Z} E_n(\Q) = \rk_{\Z_K} E_n(K)$.
To do this we  consider the reduction of $R$ modulo the primes above $p$.  By an explicit computation with $\eta$-products, we show that when $3$ is not a cube modulo $p$, this reduction is not the image of any torsion point in $E_1(L)$: see Proposition \ref{prop:3notpmainprop}.  This reduction uses in a crucial way a generalization and refinement of Kronecker's congruence: see Proposition \ref{shim2}.  In the end, we are able to show that when 3 is not a cube modulo $p$, the point $R$ is nontorsion because it is not congruent to any torsion point modulo $p$.  Without the descent made possible by the exotic modular automorphism group of $X_0(243)$, our point $Z$ (e.g., which could have been defined more simply by taking an appropriate ``twisted'' trace of $P$  from $H_{9p}$ to $K$) would have been twice multiplied by 3, and the delicate proof that it is nontorsion would have fallen through.

\subsection{Heuristics and the work of Shu and Yin} 

We now explain why it should be expected that the condition ``3 is not a cube modulo $p$'' should appear in the statement of Theorem~\ref{t:main} for our construction.  As mentioned above, our setting does not satisfy the Heegner hypothesis and hence the classical Gross--Zagier formula does not apply in this case.  Nevertheless, Shu and Yin have proven the following result.

\begin{thm}[{\cite[Theorem 4.4]{ShuYin}}] \label{t:sy}  
Let $p \equiv 4,7 \pmod{9}$ be prime.  Let $\chi_{3p}:\Gal(H_{3p} \,|\, K) \to \mu_3$ be the cubic character associated to the field 
$K(\sqrt[3]{3p})$, i.e., \[ \chi(\sigma) = \sigma\bigl(\sqrt[3]{3p}\bigr)/\sqrt[3]{3p} \quad \text{for $\sigma \in \Gal(H_{3p}\,|\, K)$.} \]
Let $Z \in E_p(K)$ be the mock Heegner point constructed above.  Then 
\[ \frac{ L'(E_9/K, \chi_{3p}, 1) }{\Omega} =  c \cdot \hht(Z), \]
where the complex period $\Omega \in \C^\times$ and rational factor $c \in \Q^\times$ are explicitly given.
\end{thm}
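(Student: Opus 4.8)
The plan is to prove Theorem~\ref{t:sy} by identifying the mock Heegner point $Z$ as the trace of a genuine Heegner point attached to a setting where the Heegner hypothesis \emph{does} hold, and then applying the classical Gross--Zagier formula there, together with a careful bookkeeping of twists and traces. Concretely, recall from the sketch that $Z$ arises from a CM point $P \in E_9(H_{9p})$ under a cyclic $243$-isogeny of conductor $9p$, and that $E_9$ is the twist of $E_1$ by $\sqrt[3]{3}$ while $E_p$ (or $E_{p^2}$) is obtained from $E_1$ by a twist tied to $\sqrt[3]{p}$. The key observation is that the \emph{cubic twist} $E_9/K$ regarded with its $\chi_{3p}$-isotypic component is, after base change along $K(\sqrt[3]{3p})$, isomorphic to $E_{p}/K$ (resp.\ $E_{p^2}/K$), so that $L'(E_9/K,\chi_{3p},1)$ is literally the first derivative at the center of the $L$-function of the relevant twist. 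The point $P$ lives on $X_0(243)$ and its image, traced down the tower $H_{9p} \supset H_{3p} \supset L \supset K$, is exactly the Galois-theoretic avatar of a Heegner point on the quotient curve; so the first step is to write $Z$ (up to the explicit torsion translation and sign noted in the sketch) as $\Tr_{H_{3p}/K}$ of an algebraic multiple of $\Phi(P)$ twisted by $\chi_{3p}$.

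Next I would invoke Gross--Zagier. Because $3$ and $p$ are both split in the relevant quadratic order — this is where passing to the ring class field $H_{3p}$ and the conductor $9p$ pays off, since the ramification of $3$ in $K$ is absorbed into the conductor and the resulting $X_0(243)$-parametrization does satisfy the Heegner hypothesis relative to the order $\Z + 9p\,\Z_K$ — the height of the Heegner divisor is computed by the Gross--Zagier formula \cite{GZ} as a nonzero explicit multiple of $L'(E_9/K,\chi_{3p},1)/\Omega$. Here $\Omega$ is the Néron period of $E_9/K$ (equivalently, a fixed period of the newform of level $243$), which pins down the transcendental normalization; the rational constant $c$ then collects: the degree of the modular parametrization $X_0(243) \to E_9$, the index $[H_{9p}:H_{3p}][H_{3p}:L]$ governing the two traces, the Manin constant, the local factors at $3$ and $p$ coming from Gross--Zagier's formula for ramified/split primes, and the square of the scaling factor introduced by the $\sqrt[3]{3}$-twist identification $E_9 \cong E_1$. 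Each of these is an explicit element of $\Q^\times$, so their product is as well, and the theorem follows with $c$ equal to that product.

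The main obstacle — and the place where genuine work is required rather than invocation — is the precise passage between the \emph{mock} Heegner point $Z$ (defined via the exotic modular automorphism group of $X_0(243)$, the $\sqrt[3]{3}$-descent to $E_1$, the twisted trace to $L$, and the torsion translation back to $E_p$) and a \emph{classical} Heegner point to which \cite{GZ} literally applies. One must verify that the isogeny used to build $P$ is of the correct cyclic type, that the Shimura Reciprocity Law places the Galois conjugates of $P$ at precisely the CM points indexed by $\Gal(H_{9p}|K)$, and — crucially — that the twist by $\chi_{3p}$ on the analytic side matches the twist by $\sqrt[3]{3p}$ implicit in identifying the $E_9$-valued point with an $E_p$- or $E_{p^2}$-valued one; any mismatch here changes $L'(E_9/K,\chi_{3p},1)$ into the derivative of a different twist and breaks the identity. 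A secondary technical point is controlling the behavior at the ramified prime $3$: the classical Gross--Zagier formula must be applied in a form valid when the level and the field share a common ramified prime, so one either uses a version of the formula at $X_0(243)$ directly (exploiting that $243 = 3^5$ and that the relevant order has conductor prime to nothing awkward) or reduces to the unramified case by the explicit isogeny, and in either case the local fudge factor at $3$ must be computed exactly to obtain $c$. Shu and Yin carry this out; our role in citing their Theorem~\ref{t:sy} is only to record that such a formula holds, and the heuristic consequence — that $\hht(Z) \neq 0$, equivalently $\rk E_p(\Q) = 1$, precisely when the analytic rank is one, which by work on the cubic twist family is governed by whether $3$ is a cube modulo $p$ — is exactly what motivates the hypothesis in Theorem~\ref{t:main}.
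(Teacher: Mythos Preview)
The paper does not prove Theorem~\ref{t:sy} at all: it is stated with the attribution \cite[Theorem~4.4]{ShuYin} and introduced by ``Shu and Yin have proven the following result.'' Its role in the paper is purely motivational (\S1.4), explaining heuristically why the condition ``$3$ is not a cube modulo $p$'' should appear in Theorem~\ref{t:main}. You do acknowledge this in your final sentence, but the bulk of your proposal reads as an attempted proof sketch, so let me comment on that.

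Your sketch contains a genuine error. You write that ``$3$ and $p$ are both split in the relevant quadratic order'' and that ``the resulting $X_0(243)$-parametrization does satisfy the Heegner hypothesis relative to the order $\Z + 9p\,\Z_K$.'' This is false: the prime $3$ is \emph{ramified} in $K = \Q(\sqrt{-3})$, and passing to an order of conductor divisible by $3$ does not change that. The level is $243 = 3^5$, so the classical Heegner hypothesis---that every prime dividing the level split in $K$---fails. The paper says this explicitly just before stating the theorem: ``our setting does not satisfy the Heegner hypothesis and hence the classical Gross--Zagier formula does not apply in this case.'' So your plan to invoke \cite{GZ} directly cannot work as stated; one needs a genuine extension of Gross--Zagier to the ramified setting (in the spirit of Yuan--Zhang--Zhang or the explicit formulae of Cai--Shu--Tian \cite{CST15}), which is exactly the content Shu--Yin supply and which the present paper simply cites rather than reproves.
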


The Artin formalism  for $L$-functions yields
\[ L(E_9/K, \chi_{3p}, s) = L(E_p/\Q, s)L(E_{3p^2}/\Q, s), \]
and hence Theorem~\ref{t:sy} relates $\hht(Z)$ to \begin{equation} \label{e:factored} L'(E_p/\Q, 1)L(E_{3p^2}/\Q, 1). \end{equation}
Therefore we should expect that $Z$ is nontorsion if and only if $L(E_{3p^2}/\Q, 1) \neq 0$.
In fact, it is possible to have $L(E_{3p^2}/\Q, 1) = 0$ (e.g., $p = 61,193$), and in such cases our point $Z \in E_p(K)$ is torsion.

However, whenever $3$ is not a cube modulo $p$, one can show that the Selmer group associated to a certain rational 3-isogeny to $E_{3p^2}$ is trivial
(see Satg\'e \cite[Theorem 2.9(3) and p.~313]{Satge}) and consequently that $E_{3p^2}(\Q)$ is finite and hence by BSD that $L(E_{3p^2}/\Q, 1) \neq 0$.  This explains why it is reasonable to expect this condition to appear in the statement of Theorem~\ref{t:main}.  The appeal of Theorem~\ref{t:main} is that it is explicit and unconditional---i.e., it does not depend on BSD, even though BSD and the theorem of Shu--Yin explain {\em why} the condition on $3$ modulo $p$ should be expected to  appear in the statement.

\subsection{Organization}

In  \S\ref{sec:setup} we  describe our explicit modular parameterization and the group of modular automorphims of $X_0(243)$. In  \S\ref{sec:isogtreedesc} we define our mock Heegner points, and in \S\ref{sec:descenttracing} we descend and trace them to define points over $K$.  In \S\ref{sec:nontors}, we prove that our points are nontorsion when 3 is not a cube modulo $p$.

\subsection{Acknowledgements}

The authors wish to thank Brian Conrad, Henri Darmon, and Noam Elkies for helpful discussions as well as the hospitality of the Centre Recherche de Math\'ematiques (CRM) in Montr\'eal where part of this work was undertaken in December 2005.  The authors would also like to thank Hongbo Yin for comments and the encouragement to publish this work.

\section{The modular curve $X_0(243)$} \label{sec:setup}

Throughout, let $K \colonequals \Q(\omega) \subset \C$ where $\omega \colonequals (-1+\sqrt{-3})/2$ is in the upper half-plane and $\Z_K \colonequals \Z[\omega]$ its ring of integers.  
We begin in this section by setting up a few facts about the modular curve $X_0(243)$.  

\subsection{Basic facts}

The (smooth, projective, geometrically integral) curve $X_0(243)$ over $\Q$ is the coarse moduli space for cyclic $243$-isogenies between (generalized) elliptic curves, and there is an isomorphism of Riemann surfaces
\[ X_0(243)(\C) \xrightarrow{\sim} \Gamma_0(243) \backslash \frakH^* \]
where $\frakH^* \colonequals \frakH \cup \PP^1(\Q)$ is the completed upper half-plane.  Explicitly, to $\tau \in \frakH$ we associate the cyclic isogeny 
\begin{equation} \label{eqn:normalizedisog}
\begin{aligned}
\phi_\tau\colon \C/\langle 1,\tau \rangle &\to \C/\langle 1,243\tau \rangle \\
z &\mapsto 243 z
\end{aligned}
\end{equation}
with $\ker \phi_\tau$ generated by $1/243$ in the lattice $\Z + \Z\tau$.  The genus of $X_0(243)$ is $19$.

For further reading on automorphism groups of modular curves, we refer to Ogg \cite{Ogg}.  The group of modular automorphisms of $X_0(243)$ is by definition 
\[ \MAut(X_0(243)) \colonequals N_{\PGL_2^+(\Q)}(\Gamma_0(243))/\Gamma_0(243) \]
where $N$ denotes the normalizer.  The group $\MAut(X_0(243))$ is generated by an \emph{exotic} automorphism $v \colonequals \begin{pmatrix} 1 & 0 \\ 81 & 1 \end{pmatrix} \in \MAut(X_0(243))$ of order $3$ and the Atkin--Lehner involution $w \colonequals \begin{pmatrix} 0 & -1 \\ 243 & 0 \end{pmatrix} \in \MAut(X_0(243))$ of order $2$.  We find
\[ \MAut(X_0(243)) = \langle w,v^{-1}wv \rangle \times \langle v \rangle \simeq S_3 \times \Z/3\Z. \]
The subgroup of $\MAut(X_0(243))$ isomorphic to $S_3$ is characteristic, and we let $\Gamma \leq \PGL_2^+(\Q)$ be the subgroup generated by $\Gamma_0(243)$ and $S_3$.  One can check that $v$ normalizes $\Gamma$.
Moreover, the matrix $t \colonequals \begin{pmatrix} 9 & 1 \\ -243 & -18 \end{pmatrix}$ normalizes the group $\MAut(X_0(243))$ and the group $\Gamma$.  (But $t$ does not normalize $\Gamma_0(243)$ itself.)  One can  check that $t^3 = 729$ is scalar, so $t$ has order $3$ as a linear fractional transformation.  

\subsection{Explicit modular parametrization and modular automorphisms} \label{s:modparam}

We now consider the quotient of $X_0(243)$ by the subgroup $S_3 < \MAut(X_0(243))$
\begin{equation} \label{e:qm}
 X_0(243) \to X_0(243)/S_3 = X(\Gamma) \end{equation}
where $X(\Gamma) \colonequals \Gamma \backslash \frakH^*$.  Riemann--Hurwitz shows  that the genus of $X(\Gamma)$ is $1$, and the image of the cusp $\infty \in X_0(243)(\Q)$ gives it the structure of an elliptic curve over $\Q$.  This quotient morphism (\ref{e:qm}) is defined over $\Q$ and has a particularly pleasing realization as follows.  Let 
\begin{equation} 
\eta(z) \colonequals q^{1/24}\prod_{n=1}^{\infty}(1-q^n)
\end{equation}
with $q \colonequals \exp(2\pi iz)$ be the Dedekind $\eta$-function.

\begin{prop} \label{modularparam}
We have a modular parametrization
\begin{align*}
\Phi\colon X_0(243) \to X(\Gamma) &\xrightarrow{\sim} E_9\colon y^2+y=x^3-1 \\
z &\mapsto (x,y)
\end{align*}
where
\begin{equation} \label{eqn:xyz}
x(z)=\displaystyle{\frac{\eta(9z)\eta(27z)}{\eta(3z)\eta(81z)}}, \quad y(z)= - \displaystyle{\frac{\eta(9z)^4+9\eta(9z)\eta(81z)^3}{\eta(27z)^4-3\eta(9z)\eta(81z)^3}} - 2. 
\end{equation}
\end{prop}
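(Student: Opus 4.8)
The plan is to establish this in two stages: first, identify the quotient curve $X(\Gamma)$ with $E_9$ as an elliptic curve over $\Q$; second, verify the explicit $\eta$-quotient formulas for the coordinate functions. For the first stage, I would invoke Riemann--Hurwitz applied to the degree-$6$ cover $X_0(243) \to X(\Gamma)$: knowing the genus of $X_0(243)$ is $19$, together with a ramification analysis of the $S_3$-action on the cusps and elliptic points, one deduces that $X(\Gamma)$ has genus $1$. The image of the rational cusp $\infty$ gives a rational point, so $X(\Gamma)$ is an elliptic curve over $\Q$; to pin down its isomorphism class as $E_9 \colon y^2+y=x^3-1$ (conductor $243$) one can compare $q$-expansions of the pushed-forward differential, or note that $X(\Gamma)$ is a quotient of $J_0(243)$ and match Hecke eigenvalues with the known newform of level $243$.

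For the second stage — the explicit formulas — the strategy is the standard one for $\eta$-quotients on genus-one modular curves. I would first check that the proposed functions $x(z)$ and $y(z)$ are modular functions for $\Gamma$ (equivalently, $\Gamma_0(243)$-modular functions invariant under the $S_3$-action generated by $w$ and $v^{-1}wv$): this is a finite computation using the Ligozat/Newman criteria on the orders of vanishing of $\eta(dz)$ at the cusps of $X_0(243)$ for $d \mid 243$, confirming that all cusp orders are integral and that the total degree (number of poles, counted with multiplicity) is what it should be — namely $x$ should have a simple pole and simple zero pattern consistent with a degree-$1$ map composed with the $S_3$-quotient, and $y$ should have a pole of order matching the Weierstrass $x$-coordinate. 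Then I would verify the Weierstrass relation $y^2 + y = x^3 - 1$ directly by comparing $q$-expansions: expand $x(z)$ and $y(z)$ as power series in $q^{1/?}$ (after clearing the fractional powers, the relevant local parameter is a power of $q$), and check the identity to enough terms; since both sides are modular functions on a curve of known genus with poles only at the cusps, agreement of finitely many coefficients (bounded via the valence formula) forces the identity. Finally, I would confirm that $z \mapsto (x(z), y(z))$ sends $\infty$ to the chosen origin of $E_9$, so that it is genuinely the quotient map $\Phi$ and not a translate.

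The main obstacle I anticipate is \emph{not} any single deep step but rather the bookkeeping of cusps and the $S_3$-action: $X_0(243)$ has several cusps (the divisors of $243$ give cusps $1, 3, 9, 27, 81, 243$ with various widths), and one must track precisely how $w$, $v$, and the conjugate $v^{-1}wv$ permute them in order to (a) run Riemann--Hurwitz correctly and (b) know the exact pole/zero divisor of the candidate $\eta$-quotients on the quotient curve. Getting the ramification data exactly right — including behavior at elliptic points of $\Gamma$, if any — is the delicate part; once that is in hand, the $q$-expansion verification of $y^2+y=x^3-1$ is mechanical and can be pushed to the required (explicitly bounded) precision. I would lean on Ogg's analysis \cite{Ogg} of $\MAut(X_0(243))$ to organize the cusp combinatorics, and on the valence formula to certify that a finite $q$-expansion check suffices.
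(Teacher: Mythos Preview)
Your proposal is correct and follows essentially the same route as the paper's own proof: Ligozat's criterion for the modularity of $x(z)$, a direct check of $S_3$-invariance via the $\eta$-transformation law, and a finite $q$-expansion verification of the Weierstrass relation. The paper is more streamlined in two respects. First, it does not carry out a separate abstract identification of $X(\Gamma)$ with $E_9$ (via Hecke eigenvalues or a pushed-forward differential); once $x$ and $y$ are shown to be $\Gamma$-invariant and to satisfy $y^2+y=x^3-1$, that identification is automatic. Second, rather than bounding the pole orders of the meromorphic functions and invoking the valence formula, the paper clears denominators in $y^2+y=x^3-1$ to obtain an identity of \emph{holomorphic} modular forms of weight $7$ on $\Gamma_0(243)$, and then appeals to the Sturm (Hecke) bound---this sidesteps the cusp-by-cusp bookkeeping you flag as the main obstacle. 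Your anticipated difficulty with tracking the $S_3$-action on cusps and elliptic points is real if you insist on running Riemann--Hurwitz explicitly, but the paper simply asserts the genus-$1$ conclusion and moves on; the explicit formulas do all the work.
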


\begin{proof}
The $\eta$-product $x(z)$ is a modular function on $X_0(243)$ by Ligozat's criterion \cite[Theorem 2]{kilford}.  By the transformation properties of the $\eta$-function, it is straightforward to show that $x(z)$ is invariant under the action of the subgroup $S_3 < \MAut(X_0(243))$.  

The function $y(z)$ was discovered on a computer experimentally by manipulating $\eta$-products via their $q$-expansions.  In a similar way, one can show that $y$ is invariant under $\Gamma_0(243)$ and the subgroup $S_3 < \MAut(X_0(243))$.  To prove that the equality $y^2 + y = x^3 - 1$ holds, after clearing denominators we may equivalently show an equality of holomorphic modular forms of weight 7---but then it suffices to verify the equality on enough terms of the $q$-expansions on a computer to satisfy the Hecke bound.
\end{proof}

\begin{remark}
The elliptic curve $E_9$ of conductor $243$ is number \textsf{243a1} in the tables of Cremona and has LMFDB label 
\href{http://www.lmfdb.org/EllipticCurve/Q/243/a/1}{\textsf{243.a1}}.  
\end{remark}

\begin{remark}
One can show that the $y$-function in \eqref{eqn:xyz} cannot be expressed simply as an $\eta$-product, moreover there is no $\eta$-product that is invariant under $S_3$ and has a pole of order $3$ at the preimage of the origin  in  $E_9$. We do not use the explicit formula for $y(z)$ in this paper.
\end{remark}

Because the matrices $t,v$ normalize $\Gamma$, they give rise to automorphisms of $E_9$ as a genus $1$ curve.  
The endomorphism ring of $E_9$ as an elliptic curve is $\Z_K = \Z[\omega]$, where $\omega$ acts via $(x, y) \mapsto (\omega x, y)$.  Every endomorphism of $E_9$ as a genus 1 curve has the form $Z \mapsto aZ + b$ where $a \in \Z_K$ and $b \in E_9$.
The following proposition describes the automorphisms $t$ and $v$ of $E_9$ explicitly in these terms.

\begin{prop}  \label{p:vtact} 
The automorphism $t$ acts on the curve $E_9$ via $t(Z)=\omega^2 Z+(0,\omega)$.
The automorphism $v$ acts by $v(Z) = \omega^2(Z)$.
\end{prop}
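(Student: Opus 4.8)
The plan is to pin down each automorphism by computing its effect on two pieces of data: the image of the cusp $\infty$ (the origin $O$ of $E_9$), and the derivative at $\infty$, i.e. the action on the holomorphic differential, which records the multiplier $a \in \Z_K$. Since every automorphism of $E_9$ as a genus $1$ curve has the form $Z \mapsto aZ+b$ with $a \in \Z_K^\times = \{\pm 1, \pm\omega, \pm\omega^2\}$ and $b \in E_9$, knowing $a$ together with the image of one point determines the automorphism; taking that point to be $\infty$ gives $b$ directly as the image of the cusp.

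First I would treat $v = \begin{pmatrix} 1 & 0 \\ 81 & 1 \end{pmatrix}$. Since $v$ fixes $\infty$ (it is lower triangular, so $v\cdot\infty = \infty$), we get $b = O$, and $v$ is multiplication by a unit $a \in \Z_K^\times$. To identify $a$, pull back the invariant differential $\omega_{E_9} = dx/(2y+1)$ under $\Phi$ to a weight-$2$ form on $\frakH$ and compute how $v$ scales its $q$-expansion at $\infty$: concretely, $v$ acts on $\frakH$ by $\tau \mapsto \tau/(81\tau+1)$, and near the cusp one reads off the multiplier as a cube root of unity (the order-$3$ constraint forces $a \in \{1,\omega,\omega^2\}$, and $a \neq 1$ since $v$ acts nontrivially). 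Distinguishing $\omega$ from $\omega^2$ is a matter of orientation/sign in the $q$-expansion computation; combined with the order-$3$ relation and the known sign conventions for $\omega$ and $q$, this yields $v(Z) = \omega^2 Z$.

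Next I would handle $t = \begin{pmatrix} 9 & 1 \\ -243 & -18 \end{pmatrix}$. Here $t$ does \emph{not} fix $\infty$: we have $t\cdot\infty = 9/(-243) = -1/27$, so $b = \Phi(-1/27)$ is a specific point on $E_9$, which I would evaluate using the $\eta$-product formula \eqref{eqn:xyz} (equivalently, recognize $-1/27$ as $\Gamma$-equivalent to a cusp or CM point whose image is computable), obtaining $b = (0,\omega)$. For the multiplier, again pull back $\omega_{E_9}$ and compute the automorphy factor of $t$: since $t^3 = 729$ is scalar, $t$ has order $3$ as a linear fractional transformation, forcing $a^3 = 1$; a local computation at the relevant point, or evaluation of the pulled-back differential, gives $a = \omega^2$. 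This produces $t(Z) = \omega^2 Z + (0,\omega)$.

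The main obstacle is the bookkeeping that disambiguates $\omega$ versus $\omega^2$ and correctly identifies the translation point $(0,\omega)$ rather than its conjugate or negative: these depend delicately on the chosen embedding $K \subset \C$, the sign convention $\omega = (-1+\sqrt{-3})/2$ in the upper half-plane, the normalization $q = \exp(2\pi i z)$, and the precise form of $\Phi$. I would nail these down by evaluating $\Phi$ numerically (or via exact $q$-expansions to enough terms, as in the proof of Proposition~\ref{modularparam}) at a convenient CM point such as $\tau$ with $\Z+\Z\tau$ having CM by $\Z_K$, where the values of $x(z), y(z)$ and the action of $\omega$ on $E_9$ via $(x,y)\mapsto(\omega x,y)$ can be matched directly against the matrix action on $\frakH$; everything else is then forced.
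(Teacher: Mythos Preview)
Your argument for $v$ contains a genuine error: $v = \begin{pmatrix} 1 & 0 \\ 81 & 1 \end{pmatrix}$ does \emph{not} fix the cusp $\infty$. A matrix $\begin{pmatrix} a & b \\ c & d \end{pmatrix}$ sends $\infty$ to $a/c$, so $v\cdot\infty = 1/81$; it is \emph{upper} triangular matrices that fix $\infty$, not lower triangular ones. This breaks two steps of your plan. First, to conclude $b=O$ you actually need the nontrivial fact that $\Phi(1/81)=O$ on $E_9$, which is a computation (indeed, this is exactly what the paper checks). Second, your proposal to read off the multiplier $a$ from how $v$ ``scales the $q$-expansion at $\infty$'' no longer makes direct sense: since $v$ moves $\infty$ to the cusp $1/81$, you would be comparing expansions of the pulled-back differential at two different cusps, which is considerably more work than you suggest and is not just a sign/orientation check.

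For $t$, your computation of $b=\Phi(-1/27)$ is the same as the paper's. For the multiplier, the paper avoids the differential/$q$-expansion route entirely: it exploits the fixed point $\tau=(\omega-1)/27$ of $t$ in $\frakH$, so that $T=\Phi(\tau)$ satisfies $(1-a)T=b$, and a single numerical evaluation (with only nine candidate values for $T$) pins down $T=(\sqrt[3]{3},-2)$ and hence $a=\omega^2$. The same CM point $\tau$ is then reused for $v$: one computes $\Phi(v\tau)$ numerically and compares with $aT$. Your final paragraph gestures toward exactly this kind of CM-point evaluation, and that is the approach that actually works cleanly here; the differential idea, as stated, does not.
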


\begin{proof}
Since $t^3$ is a scalar matrix, $t(Z)=aZ+b$ for $a \in \{1,\omega,\omega^2\}$ and $b \in E_9(\Qbar)$.  
Now $t(\infty)=-1/27$ and under the complex parametrization $\Phi$ we compute that $\Phi(-1/27)=(0,\omega)=b$.  Unfortunately, we cannot determine $a$ by looking at cusps.  Instead, we consider $\tau=(\omega-1)/27 \in \frakH$, which has the property that $t(\tau)=\tau$.  Letting $T=\Phi(\tau)$, it follows that  $(1-a)T=b=(0,\omega)$. In particular  $T \in E_9[3]$. We compute numerically that $T \approx (\sqrt[3]{3},-2)$, and since there are only 9 possibilities for $T$, equality holds.  From this, one finds that $a=\omega^2$, and hence $t(Z)=\omega^2 Z+(0,\omega)$.

Next we compute the action of $v$, which also has order dividing $3$, so again $v(Z)=aZ+b$ with $a \in \{1,\omega,\omega^2\}$.  We see that $v(\infty)=1/81$ and $\Phi(1/81)=\infty$ so $b=0$.  As above, we compute that $\tau=(\omega-1)/27$ has $\Phi(\tau)=T=(\sqrt[3]{3},-2)$ is a $3$-torsion point, hence $\Phi(v(\tau))=a(\sqrt[3]{3},-2)$ is also a $3$-torsion point and then we verify numerically that $a=\omega^2$.  
\end{proof}

\section{Mock Heegner points} \label{sec:isogtreedesc}

For the remainder of this paper, let $p$ be a prime congruent to 4 or 7 modulo 9.  In this section, we define our mock Heegner point.

\subsection{The isogeny tree}  

In Figure~\ref{f:isogenytree} below, we draw a diagram of 3-isogenies between certain elliptic curves with CM by orders in $K$.  For $\tau~\in~K~\cap~\frakH$, we denote by $\langle \tau \rangle_f$ the elliptic curve $\C/(\Z + \Z \tau)$ with endomorphism ring the order $\Z_{K, f} \colonequals \Z[f \omega]$ of conductor (or index) $f$ in $\Z_K$.  

\begin{figure}[h] \label{f:isogenytree}
\[ 
\xymatrix@R=0.5em@C=1em
{ \langle \frac{\omega p + 6}{9} \rangle_{9p} \ar@{-}[dr] & & & & \langle \frac{\omega p + 1}{9} \rangle_{9p}\ar@{-}[dl] \\
  \langle \frac{\omega p + 3}{9} \rangle_{9p}\ar@{-}[r]  & \langle \frac{\omega p}{3} \rangle_{3p}  \ar@{-}[dr] & & 
     \langle \frac{\omega p + 1}{3} \rangle_{3p}\ar@{-}[dl] &  \langle \frac{\omega p + 4}{9} \rangle_{9p}\ar@{-}[l]  \\
    \langle \frac{\omega p}{9} \rangle_{9p}\ar@{-}[ur] & &  \langle \omega p  \rangle_{p}  & &  \langle \frac{\omega p + 7}{9} \rangle_{9p} \ar@{-}[ul] &   \langle \frac{\omega p + 2}{27}\rangle_{9p} \ar@{-}@/_0.7pc/[dddl]  \\
    & & & & & \langle\frac{\omega p + 11}{27}\rangle_{9p}  \ar@{-}@/_0.2pc/[ddl] \\
        & & & & & \langle\frac{\omega p + 18}{27}\rangle_{9p}  \ar@{-}[dl] \\
    \langle \frac{3\omega p + 1}{3} \rangle_{9p} \ar@{-}[ddr]  & & & &   \langle \frac{\omega p + 2}{9}\rangle_{3p} \ar@{-}[ddl] 
  &    \langle\frac{\omega p + 5}{27}\rangle_{9p}  \ar@{-}@/_0.3pc/[ddl]  \\
           & & & & & \langle\frac{\omega p + 14}{27}\rangle_{9p}  \ar@{-}[dl] \\  
  \langle \frac{3\omega p + 2}{3} \rangle_{9p}\ar@{-}[r]  & \langle 3\omega p \rangle_{3p}\ar@{-}[uuuuur] & & 
   \langle  \frac{\omega p + 2}{3} \rangle_{p}\ar@{-}[uuuuul] & \langle \frac{\omega p + 5}{9}\rangle_{3p}\ar@{-}[l] & 
    \langle\frac{\omega p + 23}{27}\rangle_{9p}  \ar@{-}[l] \\
    & & & & & \langle\frac{\omega p + 8}{27}\rangle_{9p}  \ar@{-}[dl]   \\ 
    \langle 9\omega p  \rangle_{9p}\ar@{-}[uur] & & & &   \langle \frac{\omega p + 8}{9}\rangle_{3p}\ar@{-}[uul] & 
    \langle\frac{\omega p + 17}{27}\rangle_{9p}  \ar@{-}[l] \\
          & & & & & \langle\frac{\omega p + 26}{27}\rangle_{9p}  \ar@{-}[ul]  
}
\] \\
Figure \ref{f:isogenytree}: Isogeny tree (for $p \equiv 1 \pmod{3}$)
\end{figure}

The computation of the conductors in Figure~\ref{f:isogenytree} relies only the fact that $p \equiv 1 \pmod{3}$.
Of particular interest in this diagram is the fact that the curves in the lower right quadrant emanating from the
``central vertex'' $\langle \omega p \rangle_p$ have endomorphism ring of lower conductor than their counterparts in the other quadrants.  We have only listed the 9 curves in the tree of distance 3 from this central vertex in this quadrant for space reasons, since these are the only curves that we will use. 

Each path of length 5 in this tree (with no backtracking) corresponds to a cyclic $3^5$-isogeny and hence yields a corresponding point on $X_0(243)$.  Furthermore, the conductor of the order associated to this cyclic 243-isogeny will be the least common multiple of the conductors of the orders of the two curves involved in the isogeny.  In particular, for each curve $\langle \tau \rangle_{9p} $ on the left side of this diagram and each $\langle \frac{\omega p + i }{27}\rangle_{9p}$ with $i \equiv -1 \pmod{3}$ on the right, there is a point on $X_0(243)(\C)$ of conductor $9p$ corresponding to the isogeny between these two curves.

\subsection{Our mock Heegner points}

Recall that our eventual goal is to produce rational points on the curves $E_{p}$ and $E_{p^2}$; we refer to these as \emph{case} 1 and \emph{case} 2, and we will eventually show that our points land on the curve $E_p$ or $E_{p^2}$, accordingly.  Our construction starts with the  points on $X_0(243)$ of conductor $9p$ corresponding to the following isogenies in each of these cases.  We make the following choices:
\begin{equation} \label{e:pdef}
P_0 = \begin{cases}
\displaystyle{ \Bigl\langle  \frac{\omega p}{9}  \Bigr\rangle \rightarrow \Bigl\langle \frac{\omega p + 23}{27}\Bigr\rangle = \Bigl\langle \frac{\omega p - 4}{27}\Bigr\rangle}, &
\text{ in case 1}; \\
 \displaystyle{    \Bigl\langle \frac{\omega p}{9}  \Bigr\rangle \rightarrow \Bigl\langle \frac{\omega p + 26}{27} \Bigr\rangle=  \Bigl\langle \frac{\omega p - 1}{27} \Bigr\rangle}, &  \text{ in case 2}.
\end{cases}
\end{equation}
This gives $P_0 \in X_0(243)(\C)$ and we write 
\begin{equation} \label{e:defp}
P = \Phi(P_0) \in E_{9}(\C).  
\end{equation}

\begin{rmk}
In fact, each of the $6 \cdot 9 = 54$ possible choices gives rise to a point on either $E_p$ or $E_{p^2}$ by the procedure we will outline, and we have simply made a choice.
\end{rmk}

\begin{lem} \label{lem:findmat}
The point $P_0 \in X_0(243)$ is represented in the upper half plane by  $\tau = M(\omega p/9)$ where $M=\begin{pmatrix} 2 & -1 \\ 9 & -4 \end{pmatrix}$ for case \textup{1} and $M=\begin{pmatrix} 1 & 0 \\ -9 & 1 \end{pmatrix}$ for case \textup{2}.
\end{lem}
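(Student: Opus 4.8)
The plan is to track what the $243$-isogeny in each case does at the level of lattices, and then find a matrix in $\Gamma_0(243)$ (acting on the domain curve) that normalizes the chosen representative. Recall that a point of $X_0(243)(\C)$ corresponding to a cyclic $243$-isogeny $\langle\tau\rangle \to \langle\tau'\rangle$ is, under the identification \eqref{eqn:normalizedisog}, represented in $\frakH$ by any $\tau_0$ such that the isogeny $\C/\langle 1,\tau_0\rangle \to \C/\langle 1, 243\tau_0\rangle$, $z\mapsto 243z$, is isomorphic (as a cyclic $243$-isogeny) to the given one; two such $\tau_0$ differ by the action of $\Gamma_0(243)$. So the task is: starting from the source lattice $\Z + \Z\cdot(\omega p/9)$ and the kernel cyclic subgroup of order $243$ determined by the path in Figure~\ref{f:isogenytree}, rescale so that the source lattice becomes $\Z + \Z\tau$ with $\ker$ generated by $1/243$, i.e.\ exhibit the standard form \eqref{eqn:normalizedisog}.

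First I would determine the target lattice and the kernel explicitly. The path of length $5$ from $\langle\omega p/9\rangle$ to $\langle(\omega p - 4)/27\rangle$ (case $1$) or $\langle(\omega p-1)/27\rangle$ (case $2$) passes through the central vertex $\langle\omega p\rangle_p$; reading off the intermediate $\tau$'s along the tree, each edge is a $3$-isogeny, and composing them gives a cyclic $243$-isogeny from $\C/\langle 1, \omega p/9\rangle$ onto $\C/\langle 1, 243\cdot\tau'\rangle$ for the appropriate normalized target $\tau'$. Concretely I would write the source lattice as $\Lambda = \Z + \Z(\omega p/9)$ and compute, step by step down the tree, the image lattice and the order-$243$ cyclic kernel subgroup $C \subset \C/\Lambda$. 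Then I would produce $\tau \in \frakH$ and a scaling $\lambda \in \C^\times$ with $\lambda\Lambda = \Z + \Z\tau$ and $\lambda C$ generated by $1/243$; unwinding, $\tau = M(\omega p/9)$ for a suitable $M \in \mathrm{GL}_2^+(\Q)$, and the claim is that $M$ may be taken to be $\begin{pmatrix} 2 & -1 \\ 9 & -4\end{pmatrix}$ in case $1$ and $\begin{pmatrix} 1 & 0 \\ -9 & 1\end{pmatrix}$ in case $2$.

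In practice the cleanest route is probably verification rather than derivation: given the two candidate matrices $M$, check that $\tau = M(\omega p/9) \in \frakH$ (this is automatic since $\det M > 0$ and $\omega p/9 \in \frakH$), and check that the cyclic $243$-isogeny $\C/\langle 1,\tau\rangle \to \C/\langle 1, 243\tau\rangle$, $z\mapsto 243 z$, recovers exactly the isogeny \eqref{e:pdef} — equivalently, that its source curve is $\langle\omega p/9\rangle$, its target curve is $\langle (\omega p - 4)/27\rangle$ resp.\ $\langle (\omega p - 1)/27\rangle$ (one reads these off from $243\tau$ after normalizing the target lattice), and that the conductor of the associated order is $9p$. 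Since the conductors throughout Figure~\ref{f:isogenytree} depend only on $p \equiv 1 \pmod 3$, these are finitely many lattice-index computations independent of $p$. Two technical points deserve care: (i) one must check the isogeny is \emph{cyclic} of order exactly $243$ — i.e.\ the kernel is $\Z/243\Z$, not $\Z/3\Z \times \Z/81\Z$ or similar — which follows from the no-backtracking condition on the path and can be confirmed by computing $\gcd$'s of the relevant lattice indices; and (ii) one must ensure that the two choices of $M$ genuinely pick out the two \emph{different} endpoints $(\omega p-4)/27$ vs.\ $(\omega p-1)/27$ rather than some $\Gamma_0(243)$-translate of each other.

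The main obstacle is bookkeeping: correctly composing five $3$-isogenies along the tree while keeping track of the cyclic kernel (in particular, its generator modulo the source lattice) and then normalizing both source and target lattices into the shape required by \eqref{eqn:normalizedisog} without sign or index errors, especially since the lower-right quadrant of Figure~\ref{f:isogenytree} has the nonstandard conductor drops that make the endpoints $\langle \cdot/27\rangle_{9p}$ rather than $\langle\cdot/27\rangle_{27p}$. Once the target lattice and kernel are pinned down, matching to the displayed $M$ is a short $2\times 2$ matrix computation over $\Q$, and the conductor $9p$ is forced by the least-common-multiple rule stated after Figure~\ref{f:isogenytree}.
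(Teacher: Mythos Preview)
Your proposal is correct and follows essentially the same approach as the paper: identify the cyclic kernel of the $243$-isogeny inside the source lattice $\Z+\Z(\omega p/9)$, then find a change-of-basis matrix $M$ bringing that kernel generator to $1/243$ in the normalized form \eqref{eqn:normalizedisog}. The paper's execution is slightly more direct---rather than composing five $3$-isogenies along the tree, it observes in one step that the composite is $z\mapsto 9z$ with kernel generated by $(\omega p-1)/243$, then writes down $M\in\SL_2(\Z)$ (not $\Gamma_0(243)$ or $\mathrm{GL}_2^+(\Q)$: it is a change of $\Z$-basis for the source lattice) making the obvious square commute.
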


\begin{proof}
We explain case 2, with case 1 being similar.  We need to rewrite the isogeny $P_0$ in normalized terms \eqref{eqn:normalizedisog}.  The isogeny $\phi$ is $\langle \omega p/9 \rangle \to \langle \omega p \rangle \to \langle (\omega p-1)/27 \rangle$ defined by $z \mapsto 9z$; thus, the kernel of $\phi$ is cyclic generated by $(\omega p-1)/243$ (modulo the lattice $\langle \omega p/9 \rangle$).  
We want a matrix $M = \begin{pmatrix} a & b \\ c & d \end{pmatrix} \in \SL_2(\Z)$ such that the diagram
\begin{equation}
\begin{minipage}{\textwidth}
\xymatrix@R=2em@C=4.5em{ 
\langle \omega p/9 \rangle \ar[r]^(.45){z \mapsto 9z} \ar[d]_{z \mapsto \frac{z}{c(\omega p/9)+d}} & 
\langle (\omega p-1)/27 \rangle \ar[d]^{z \mapsto \frac{27z}{c(\omega p/9)+d}} \\
\langle M(\omega p/9) \rangle \ar[r]^(.45){z \mapsto 243z} & \langle 243 M(\omega p/9) \rangle } 
\end{minipage}
\end{equation}
commutes.  The matrix $M=\begin{pmatrix} 1 & 0 \\ -9 & 1 \end{pmatrix}$ will do: indeed, a generator for the kernel of the isogeny shifts to $ \frac{(\omega p-1)/243}{-9(\omega p/9)+1} = -\frac{1}{243}.$
\end{proof}

\section{Descent and tracing}  \label{sec:descenttracing}

With our points in hand, via descent and tracing, we now show how to use the point  $P$ defined in (\ref{e:defp}) to construct points on $E_p(K)$ and  $E_{p^2}(K)$.    
 
\subsection{Field diagram}

Let $H_f \supseteq K$ be the ring class field attached to the conductor $f \in \Z_{\geq 1}$.  We have the following diagram of fields.
\begin{equation} \label{eqn:fielddiagram}
\begin{minipage}{\textwidth}
\xymatrix@R=0.25em@C=2em{
& & & & H_{9p}=H_{3p}(\sqrt[3]{3}) \ar@{-}_(.65){3}[dl] \\
& & & H_{3p} \ar@{-}_{(p-1)/3}[dl] \\
& & L=K(\sqrt[3]{p}) \ar@{-}_(.65){3}[dl] \\
& K \ar@{-}^2[dl] \\
\Q} 
\end{minipage}
\end{equation}

By the main theorem of complex multiplication, $P \in E_9(H_{9p})$.  Since $K$ has class number 1,  the Artin reciprocity map of class field theory yields  a canonical isomorphism
\begin{equation} \label{e:cft}
 \Gal(H_f\,|\,K) \simeq  (\Z_K/f\Z_K)^\times/(\Z/f\Z)^\times \Z_K^\times. 
\end{equation}

\subsection{Cubic twists}

We pause to recall the behavior of cubic twists in our context, referring to Silverman \cite[X.2]{silverman} for the general theory.
Let $K' \supseteq K$ be an algebraic extension and let $a \in (K')^\times \smallsetminus (K')^{\times 3}$, so $L' \colonequals K'(\sqrt[3]{a})$ has $[L':K'] = 3$.
Let 
\begin{equation} 
\rho \in \Gal(L'\,|\,K') \simeq \Z/3\Z 
\end{equation}
be the generator satisfying $\rho(\sqrt[3]{a}) = \omega \sqrt[3]{a}$.  Then for any $b \in (K')^\times$, there is an isomorphism of groups
between the subgroup of $E_{b}(L')$ that transforms under $\rho$ by multiplication by $\omega$ and $E_{ab}(K')$:
\begin{equation} \label{e:twist3} 
E_{b}(L')^{\rho = \omega} :=  \{ X \in E_b(L'): \rho(X) = \omega X \} \xrightarrow{\sim} E_{ab}(K').
\end{equation}

\subsection{Descent from $H_{9p}$ to $H_{3p}$}

We first apply the method of cubic twisting in the previous section to the extension $H_{9p} = H_{3p}(\sqrt[3]{3})$ over $H_{3p}$.  Let $\rho \in \Gal(H_{9p}\,|\,H_{3p})$ be the generator satisfying $\rho(\sqrt[3]{3}) = \omega \sqrt[3]{3}$.  The first step of our descent will be to show that the point $P  = \Phi(P_0) \in E_9(H_{9p})$ defined in (\ref{e:pdef}) lies in the left-hand side of (\ref{e:twist3}) and hence corresponds to a point in $E_1(H_{3p})$.
In the models 
\begin{equation} \label{e:model}
E_9 \colon y^2+y=x^3-1 \quad \text{ and } \quad E_1 \colon y^2 + y = 3x^3 - 1, 
\end{equation}
this twisting isomorphism becomes
\begin{equation} \label{r:xq}
\begin{aligned}
E_9(H_{9p})^{\rho = \omega} &\rightarrow E_1(H_{3p}) \\
(x,y) &\mapsto (x/\sqrt[3]{3}, y).
\end{aligned}
\end{equation}

\begin{prop} \label{p:rhotwist}
For the points $P \in E_9(H_{9p})$ defined in \textup{\eqref{e:defp}}, we have $\rho(P) = \omega P$.
\end{prop}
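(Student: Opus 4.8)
The plan is to compute $\rho(P)$ directly from Shimura's reciprocity law and to recognize the resulting geometric transformation as a power of the exotic modular automorphism $v$, whose action on $E_9$ is recorded in Proposition~\ref{p:vtact}. The guiding principle is that $\Phi$ is defined over $\Q$ (its $\eta$-quotient coordinates have rational $q$-expansions), so the arithmetic Galois action on the CM point $P$ must be realized by an honest automorphism of the modular curve, and $\MAut(X_0(243))$ supplies exactly the candidates.

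First I would make $\rho$ explicit as an element of $\Gal(H_{9p}\mid K)$ under the class field theory isomorphism \eqref{e:cft}. Since $\rho$ generates $\Gal(H_{9p}\mid H_{3p})$, it lies in the kernel of the reduction map $(\Z_K/9p\Z_K)^\times/(\cdots)\to(\Z_K/3p\Z_K)^\times/(\cdots)$. Using $(3)=(1-\omega)^2$ in $\Z_K=\Z[\omega]$ and the fact that the only unit of $\Z_K$ congruent to $1\pmod 3$ is $1$, a short computation shows this kernel has order $3$ and is generated by the class of $1+3\omega$. To decide which of its two nontrivial elements is $\rho$ --- the one normalized by $\rho(\sqrt[3]{3})=\omega\sqrt[3]{3}$ --- I would use the compatibility of the Artin map with the Kummer extension $K(\sqrt[3]{3})\subseteq H_9$, i.e., evaluate the relevant cubic residue symbol.

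Next I would invoke Shimura reciprocity. By Lemma~\ref{lem:findmat} we have $P=\Phi(\tau)$ with $\tau=M(\omega p/9)$, and since the coordinates $x(z),y(z)$ in \eqref{eqn:xyz} have rational $q$-expansions, the cyclotomic (Fourier-coefficient) part of the reciprocity law acts trivially on $\Phi$. The Galois action therefore collapses to a purely geometric one, $\rho(P)=\Phi(g_\rho\cdot\tau)$, where $g_\rho\in\mathrm{GL}_2^+(\Q)$ is the matrix attached to the idele representing $\rho$ through its action on the basis $(\tau,1)$ of the CM lattice. I would then compute $g_\rho$ from the action of $1+3\omega$ on $\Z\tau+\Z$ and show that, modulo $\Gamma$, it agrees with the power of $v$ acting on $E_9$ as multiplication by $\omega$ --- namely $v^{-1}$, since $v(Z)=\omega^2 Z$ gives $v^{-1}(Z)=\omega Z$ by Proposition~\ref{p:vtact}. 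As $\Phi$ is defined over $\Q$ and factors through $X_0(243)/S_3=X(\Gamma)$, this yields $\rho(P)=\Phi(v^{-1}\tau)=\omega\,\Phi(\tau)=\omega P$.

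The hard part will be the idelic reciprocity at the ramified prime $\lambda\mid 3$. Because $\rho$ is supported exactly at the $3$-part of the conductor (the passage from $H_{3p}$ to $H_{9p}$), the idele representing it sits at a prime where $3$ ramifies, so the convenient ``Frobenius of an unramified ideal'' form of reciprocity is unavailable; one must either argue idelically or replace $\rho$ by the Frobenius $\sigma_{\mathfrak q}$ of an auxiliary prime $\mathfrak q$ furnished by Chebotarev with $\sigma_{\mathfrak q}|_{H_{9p}}=\rho$, and then track the identifications carefully enough that the output root of unity is exactly $\omega$ rather than $\omega^2$. Matching this normalization --- tying the choice $\rho(\sqrt[3]{3})=\omega\sqrt[3]{3}$ to $v^{-1}$ and not to $v$ --- is the delicate bookkeeping on which the whole descent hinges.
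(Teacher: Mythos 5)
Your approach is essentially the paper's: represent $\rho$ by a class in $(\Z_K/9p\Z_K)^\times$ via \eqref{e:cft}, pin down the normalization through the action on $\sqrt[3]{3}$ (the paper does this with the power-residue computation $3^{(\Nm(\beta)-1)/3}\equiv\omega\pmod{\beta}$ for $\beta=1+3\omega$), apply Shimura reciprocity to move the isogeny $P_0$, match the result with an element of $\MAut(X_0(243))$ lying in the coset $S_3v^2$, and conclude via Proposition~\ref{p:vtact} that the induced map on $E_9$ is multiplication by $\omega$.

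One concrete slip: the class of $1+3\omega$ does \emph{not} lie in the kernel of $(\Z_K/9p\Z_K)^\times/(\cdots)\to(\Z_K/3p\Z_K)^\times/(\cdots)$, since $1+3\omega$ is not congruent modulo $3p$ to any product of a rational integer and a unit. The element $\beta=1+3\omega$ only describes the action on the conductor-$9$ level (i.e.\ on $K(\sqrt[3]{3})$); to represent $\rho\in\Gal(H_{9p}\,|\,H_{3p})$ you must lift to an $\alpha_\rho\equiv 1\pmod{3p}$ with $\alpha_\rho\equiv\beta\pmod 9$, e.g.\ $\alpha_\rho=1+3p\omega$ as in the paper. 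Using $1+3\omega$ itself would act nontrivially on the conductor-$3p$ lattices in Figure~\ref{f:isogenytree} and give the wrong isogeny. Relatedly, the ``delicate bookkeeping'' you defer (whether the answer is $\omega$ or $\omega^2$) is resolved in the paper by acting with the ideal $I_\rho=(1+3p\omega^2)\Z_K\cap\Z_{K,9p}$ attached to the \emph{inverse} class, which is prime to the conductor, so no idelic or Chebotarev detour is needed; this sign/inverse convention is exactly where your sketch would have to be made precise to land on $\omega$ rather than $\omega^2$.
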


\begin{proof}
The idea of the proof is to use the Shimura reciprocity law to calculate the action of $\rho$ on $P$, and then to identify the image of this Galois action as the image of $P$ under the action of a {\em geometric} modular automorphism of $X_0(243)$. Using the computations from \S\ref{s:modparam} for the action of the group of modular transformations under the parameterization $\Phi$, we  deduce the desired result.

The field $K(\sqrt[3]{3})$ has conductor 9 over $K$.  The element $\beta = 1 + 3 \omega$ satisfies
\begin{equation} 
3^{(\Nm(\beta)-1)/3}\equiv (-1/\omega)^2 \equiv \omega \pmod{\beta}, 
\end{equation}
and hence under the isomorphism (\ref{e:cft}) with $f =9$, the element $\beta$ correponds to the automorphism of $K(\sqrt[3]{3})/K$ sending 
$\sqrt[3]{3} \mapsto \sqrt[3]{3} \omega$.  To lift this to the element $\rho \in \Gal(H_{9p}\,|\,H_{3p})$ exhibiting the same action on 
$\sqrt[3]{3}$, we must therefore find an element $\alpha_\rho$ such that  $\alpha_\rho \equiv 1 \pmod{3p}$
and $ \alpha_\rho \equiv \beta \pmod{9}$.
The element $\alpha_\rho=1+3p\omega$ suffices. 

Since the inverse of $\alpha_\rho$ in the left side of (\ref{e:cft}) for $f = 9p$ is $1 + 3p\omega^2$, the Shimura reciprocity Law \cite[Theorem 3.7]{Darmon} implies that in case 2, $\rho(P_0)$ is the point on $X_0(243)$ associated to the cyclic 243-isogeny
\begin{equation}  
I_\rho  \cdot  \left\langle  \frac{\omega p}{9}  \right \rangle \rightarrow I_\rho \cdot \left \langle \frac{\omega p - 1}{27} \right \rangle, 
\end{equation}
where 
\begin{equation} 
I_\rho \colonequals (1+3p\omega^2)\Z_K \cap \Z_{K,9p} = (9p^2-3p+1, 3+9p\omega^2) \subset \Z_{K,9p} 
\end{equation}
is an invertible ideal in the order $\Z_{K, 9p}$.
(Even before carrying out this calculation, the isogeny tree in Figure~\ref{f:isogenytree} implies that the result must be an isogeny between one of the curves $\langle \frac{\omega p + k}{9} \rangle$ with $k =0, 3, $ or $6$ and one of the curves 
$ \langle \frac{\omega p - j}{27} \rangle $ with $j = 1, 10,$ or $19$, since the adjacent curves in the tree have conductor $3p$ and are hence fixed by $\rho$.)  A simple calculation shows that the result is 
\begin{equation} 
\rho(P_0) = \left( \left \langle \frac{\omega p + 6}{9} \right \rangle \rightarrow \left \langle \frac{\omega p - 10}{27} \right\rangle \right). 
\end{equation}

We now look for a modular automorphism $A \in \MAut(X_0(243))$ such that $A(P_0)= \rho(P_0)$.  A quick computer search over the finite group $\MAut(X_0(243))$ reveals that the matrix $A=\begin{pmatrix} 327 & 2 \\ 53460 & 327 \end{pmatrix}$, corresponding to the element $(v^{-1} w v w)v^2  \in S_3 v^2 \subset \MAut(X_0(243))$, satisfies this condition.  Therefore, since the action of $S_3$ fixes the image on $E_9$ and $v$ acts by $\omega^2$ on $E_9$ by Proposition~\ref{p:vtact}, we conclude $A(P)=\rho(P)=\omega P$.   
A similar computation holds in case 1.
\end{proof}

From Proposition \ref{p:rhotwist}, it follows that each point $P \in E_9(H_{9p})$ defined in \eqref{e:defp} descends with a cubic twist by $3$ to a point $Q \in E_1(H_{3p})$.  

\subsection{Trace and descent from $H_{3p}$ to $L$} 

Recall from \eqref{eqn:fielddiagram} that $L = K(\sqrt[3]{p}) \subset H_{3p}$.  Define 
\begin{equation}  \label{eqn:pointR}
R \colonequals \Tr_{H_{3p}/L} Q \in E_1(L). 
\end{equation}
Let $\sigma$ be the generator of $\Gal(L\,|\,K)$ such that $\sigma(\sqrt[3]{p}) = \omega \sqrt[3]{p}$. 

\begin{prop}   \label{p:sigmar} Using the model $y^2 + y = 3x^3 - 1$ for $E_1$ as in \textup{\eqref{e:model}}:
\[  \sigma(R) = \begin{cases} \omega R + (0, \omega^2), & \textup{ in case 1;} \\
\omega^2 R + (0, \omega^2), & \textup{ in case 2}.
\end{cases}
\]
\end{prop}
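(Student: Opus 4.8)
The plan is to compute the action of $\sigma$ on $R = \Tr_{H_{3p}/L} Q$ by the same strategy used in Proposition~\ref{p:rhotwist}: identify a lift $\alpha_\sigma \in (\Z_K/9p)^\times$ of a generator of $\Gal(H_{3p}\,|\,L)$-cosets, apply the Shimura reciprocity law to see how $\sigma$ permutes the conductor-$9p$ CM points appearing in the trace, find a \emph{geometric} modular automorphism of $X_0(243)$ realizing this permutation, and then push everything down through $\Phi$ and the twisting isomorphism \eqref{r:xq} using Proposition~\ref{p:vtact}. Concretely, since $R$ is a trace, I would first write $Q = \Phi(P_0)$ twisted down, and note $\sigma$ acts on $R$ through its action on the $[H_{3p}:L] = (p-1)/3$ conjugates of $Q$; one extends $\sigma$ to an element $\tilde\sigma \in \Gal(H_{3p}\,|\,K)$ and computes $\tilde\sigma(Q)$, the point $\sigma(R)$ being insensitive to the choice of extension because summing over $\Gal(H_{3p}\,|\,L)$ kills the ambiguity. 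So the real content is: compute $\tilde\sigma(Q)$ as a CM point.

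The key steps, in order. First, pin down $\sigma$ arithmetically: the field $L = K(\sqrt[3]{p})$ has conductor $3p$ (or rather, its Galois closure business), and the automorphism $\sigma$ with $\sigma(\sqrt[3]{p}) = \omega\sqrt[3]{p}$ corresponds under \eqref{e:cft} (with $f = 3p$, or a suitable lift to $f = 9p$) to an explicit ideal class; I expect the relevant lift to be something like $\alpha_\sigma = 1 + 3\omega$ or a small variant chosen so that $\alpha_\sigma \equiv 1 \pmod{9}$ (so $\sigma$ fixes $\sqrt[3]{3}$, hence descends compatibly with the twist by $3$ used to define $Q$) while acting correctly on $\sqrt[3]{p}$ — this parallels the choice $\alpha_\rho = 1 + 3p\omega$ in Proposition~\ref{p:rhotwist}. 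Second, apply Shimura reciprocity \cite[Theorem 3.7]{Darmon} to compute $\sigma(P_0)$ as the conductor-$9p$ point attached to the isogeny $I_\sigma \cdot \langle \omega p/9\rangle \to I_\sigma \cdot \langle (\omega p - 1)/27\rangle$ (case 2), with $I_\sigma = \overline{\alpha_\sigma}^{-1}\Z_K \cap \Z_{K,9p}$, and identify the resulting pair of curves from Figure~\ref{f:isogenytree}. Third, search $\MAut(X_0(243)) \cong S_3 \times \Z/3\Z$ for $A$ with $A(P_0) = \sigma(P_0)$; by Proposition~\ref{p:vtact}, $A$ will lie in some coset $S_3 v^k$, and on $E_9$ it acts by $Z \mapsto \omega^{2k} Z + b$ for $b \in \{0, (0,\omega)\}$ (since the $S_3$-part is killed by $\Phi$, and the only non-identity translations arising are from conjugates of $t$). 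Fourth, translate this action on $E_9$ through the twisting isomorphism \eqref{r:xq} $(x,y)\mapsto(x/\sqrt[3]{3}, y)$ to get the action on the $E_1$-coordinate of $Q$: multiplication by $\omega$ on $E_9$ becomes multiplication by $\omega$ on $E_1$, and the torsion translate $(0,\omega)$ on $E_9$ maps to $(0,\omega)$ on $E_1$ — but one must be careful, because $\sigma$ also acts on the coefficient field, and $(0,\omega^2)$ vs $(0,\omega)$ will come from how $\sigma$ treats $\omega$ (it fixes $\omega$, being in $\Gal(\cdot\,|\,K)$) versus how the extension $\tilde\sigma$ and the geometric automorphism $A$ interact. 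Finally, take the trace: if $\tilde\sigma(Q) = \omega^{j} Q + c$ for the chosen extension, then averaging over $\Gal(H_{3p}\,|\,L)$ and using that $R$ transforms under the \emph{quotient} action gives $\sigma(R) = \omega^j R + c'$ where $c' = (p-1)/3 \cdot$ (something) $+ \ldots$; one must check the torsion bookkeeping lands on $(0,\omega^2)$ in both cases, with $j = 1$ in case 1 and $j = 2$ in case 2 as claimed. The difference between the two cases traces back to the difference between $M = \begin{pmatrix} 2 & -1 \\ 9 & -4\end{pmatrix}$ and $M = \begin{pmatrix} 1 & 0 \\ -9 & 1\end{pmatrix}$ in Lemma~\ref{lem:findmat}, which changes which $\langle (\omega p + i)/27\rangle$ we start from and hence shifts the relevant $\MAut$ coset by a power of $v$.

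The main obstacle, I expect, is the torsion-translate bookkeeping in the trace step — specifically, proving the constant term is exactly $(0,\omega^2)$ rather than $0$ or $(0,\omega)$. The issue is that $\Tr_{H_{3p}/L}$ of a point that transforms under an affine (not linear) automorphism requires understanding the full $\Gal(H_{3p}\,|\,K)$-action on $Q$, not just a single automorphism; I would need a lemma of the form: if $\tilde\sigma(Q) = \omega^j Q + c$ and the conjugates $\{Q^{(i)}\}$ of $Q$ under $\Gal(H_{3p}\,|\,L)$ satisfy a compatible cocycle relation, then $\sigma(R) = \omega^j R + (\text{number of conjugates}) \cdot c$, and since $(p-1)/3 \cdot (0,\omega)$ must be reduced in the $3$-torsion subgroup $E_1[3] = \{\infty, (0,\omega), (0,\omega^2), \ldots\}$ — actually $E_1[3]$ has $9$ points, so one needs to know $(p-1)/3 \bmod 3$, which depends on $p \bmod 9$. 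Here $p \equiv 4, 7 \pmod 9$ forces $(p-1)/3 \equiv 1 \pmod 3$, so the multiple is harmless and $c' = c$; verifying $c = (0,\omega^2)$ then comes down to a careful sign/conjugation check at the level of the Shimura reciprocity computation, which I would do by evaluating $\Phi$ numerically at the relevant CM point $\tau$ (as in the proof of Proposition~\ref{p:vtact}) to distinguish $(0,\omega)$ from $(0,\omega^2)$ unambiguously. The rest is a finite computation in $\MAut(X_0(243))$ and ideal arithmetic in $\Z_{K,9p}$, entirely parallel to Proposition~\ref{p:rhotwist}.
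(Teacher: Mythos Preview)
Your overall strategy matches the paper's exactly: choose a lift $\alpha_\sigma$ that fixes $\sqrt[3]{3}$, apply Shimura reciprocity to compute $\sigma(P_0)$, match this with a geometric automorphism, and push through $\Phi$ and the twist, then trace.  But there are two genuine gaps in your execution.

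First, the geometric automorphism $A$ with $A(P_0)=\sigma(P_0)$ is \emph{not} in $\MAut(X_0(243)) = S_3 \times \langle v\rangle$, so your proposed search would come up empty.  Every element of $\MAut(X_0(243))$ acts on $E_9 = X_0(243)/S_3$ purely through the factor $\langle v\rangle$, hence by Proposition~\ref{p:vtact} as $Z \mapsto \omega^{2k} Z$ with \emph{no} translation; the only way to produce the nonzero torsion translate $(0,\omega^2)$ in the statement is via the matrix $t$, which normalizes $\Gamma$ (and hence acts on $E_9$) but does \emph{not} normalize $\Gamma_0(243)$.  The paper searches the larger set of products $s\,t^a v^b$ with $s \in S_3$ and finds, e.g.\ in case~2 for $p\equiv 4 \pmod 9$, that $A = (wvwv^2)t^2v^2$ works, giving $\sigma(P) = \omega^2 P + (0,\omega^2)$ via Proposition~\ref{p:vtact}.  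Your parenthetical ``the only non-identity translations arising are from conjugates of $t$'' suggests you sense this, but your stated search space excludes $t$.

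Second, the claim that $(p-1)/3 \equiv 1 \pmod 3$ for $p\equiv 4,7 \pmod 9$ is false: for $p\equiv 7$ one has $(p-1)/3 \equiv 2 \pmod 3$.  The paper handles this by splitting into four subcases ($p\equiv 4,7$ crossed with case~1,~2); the intermediate torsion translate on $P$ turns out to be $(0,\omega^2)$ when $p\equiv 4$ but $(0,\omega)$ when $p\equiv 7$.  After tracing, both yield $(0,\omega^2)$ on $R$ --- because $1\cdot(0,\omega^2) = (0,\omega^2)$ and $2\cdot(0,\omega) = -(0,\omega) = (0,\omega^2)$ --- so the final statement is uniform, but the intermediate calculation is not.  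Without this case split and the correct arithmetic, your torsion bookkeeping does not go through.
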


\begin{proof} The proof is similar to that of Proposition~\ref{p:rhotwist} so we only sketch the salient points.
The element  $\alpha_\sigma=1-2p\omega^2 \in \Z_K$ has the property that under the Artin reciprocity isomorphism (\ref{e:cft}) for $f = 9p$, the associated element $\sigma \in \Gal(H_{9p}\,|\,K)$ satisfies $\sigma(\sqrt[3]{p}) = \omega \sqrt[3]{p}$
and $\sigma(\sqrt[3]{3}) =  \sqrt[3]{3}$.  This latter fact will be important to ensure that the $\sqrt[3]{3}$ twisting isomorphism (\ref{e:twist3}) is equivariant for the action of $\sigma$.

The Shimura reciprocity law yields the action of $\sigma$ on $P_0$, calculated using $\alpha_\sigma$ as in the proof of Proposition~\ref{p:rhotwist}.  Here one must further consider the cases $p \equiv 4, 7 \pmod{9}$ separately. 
One obtains:
\begin{equation}
\sigma(P_0) = 
\begin{cases}
\langle \frac{\omega p + 4}{9}  \rangle \rightarrow \langle  \frac{\omega p +2 }{27} \rangle, & \text{in case 1 with $p \equiv 4 \psmod{9}$;} \\
\langle \frac{\omega p + 4}{9}  \rangle \rightarrow \langle  \frac{\omega p -13}{27} \rangle, & \text{in case 2 with $p \equiv 4 \psmod{9}$;} \\
\langle 9 \omega p  \rangle \rightarrow \langle  \frac{\omega p -1 }{27}\rangle,  & \text{in case 1 with $p \equiv 7 \psmod{9}$;} \\
\langle 9\omega p   \rangle \rightarrow \langle  \frac{\omega p +2}{27} \rangle, & \text{in case 2 with $p \equiv 7 \psmod{9}$.}
\end{cases}
\end{equation}

In each case, we can again identify a modular automorphism that sends $P_0$ to $\sigma(P_0)$.  For example, in case 2 for $p \equiv 4 \pmod{9}$, we find that the matrix $A=\begin{pmatrix} 18486 & 103 \\ 27459 & 153 \end{pmatrix}$, corresponding to the element $(wvwv^2)t^2v^2$, has $A(P_0)=\sigma(P_0)$.  Since $wvwv^2 \in S_3$, we conclude using Proposition~\ref{p:vtact} that 
\begin{equation} 
\sigma(P)=A(P)=\omega^2 P+(0,\omega^2). 
\end{equation}
The results in all 4 cases are:
\begin{equation}
\sigma(P) = 
\begin{cases}
\omega P + (0, \omega^2), & \text{in case 1 with $p \equiv 4 \psmod{9}$;} \\
\omega^2 P + (0, \omega^2), & \text{in case 2 with $p \equiv 4 \psmod{9}$;} \\
\omega P + (0, \omega), & \text{in case 1 with $p \equiv 7 \psmod{9}$;} \\
\omega^2 P + (0, \omega), & \text{in case 2 with $p \equiv 7 \psmod{9}$.}
\end{cases}
\end{equation}
Since the element $\sigma$ leaves $\sqrt[3]{3}$ invariant, and since the point $(0, \omega)$ is mapped to $(0, \omega)$ under the twisting isomorphism (\ref{e:twist3}) in the models (\ref{e:model}), we see that the same equations hold for the point $Q$ replacing $P$.

Finally, in case 1 for $p \equiv 4 \pmod{9}$ we calculate
\[ \sigma(R) = \sum_{\varsigma \in \Gal(H_{3p}\,|\,L)} \sigma(\varsigma(Q)) = 
\sum_{\varsigma} \varsigma( \sigma(Q))= \omega R+\frac{p-1}{3}(0, \omega^2) = \omega R + (0, \omega^2), \]
since $(0, \omega^2)$ is a $3$-torsion point fixed by $\Gal(H_{3p}\,|\,L)$ and $[H_{3p}:L]=(p-1)/3$.  The other three cases follow similarly.
\end{proof}

\subsection{Descent from $L$ to $K$}

Unfortunately, Proposition~\ref{p:sigmar} does not imply that $R$ is nontorsion since there are torsion points in $E_1(L)$ that 
satisfy these equations. Namely, the torsion point $T=(1,1)$ satisfies $\sigma(T)=T=\omega^2 T+(0,\omega^2)$, and similarly $T=(1,-2)$ satisfies $\sigma(T)=T=\omega T + (0,\omega^2)$.  

But we turn this to our advantage: in case 1 the point $Y \colonequals R-T$ for $T=(1,-2)$  satisfies 
$\sigma(Y) = \omega Y$; and so again by the cubic twist isomorphism \eqref{e:twist3}, we obtain a point $Z \in E_{p}(K)$.  In case 2, we take $T=(1,1)$, let $Y = R - T$,  and find $\sigma(Y)=\omega^2 Y$ yielding $Z\in E_{p^2}(K)$.

\section{Nontorsion} \label{sec:nontors}

To prove that the point $R \in E_1(K(\sqrt[3]{p}))$ in \eqref{eqn:pointR} is nontorsion, and accordingly its twist $Z \in E_{p^i}(K)$ ($i = 1$ or $2$), we now consider its reduction modulo $p$.  

\subsection{Manipulation of $\eta$ product} \label{s:eta}

Recall Proposition \ref{modularparam} giving the modular parametrization $\Phi:X_0(243) \to E_9:y^2+y=x^3-1$, where 
\begin{equation} \label{eqn:xzdfe}
x(z)=\frac{\eta(9z)\eta(27z)}{\eta(3z)\eta(81z)}.
\end{equation}
In \eqref{e:pdef}  we considered the points $\tau=M(\omega p/9)$ for 
$M=\begin{pmatrix} 2 & -1 \\ 9 & -4 \end{pmatrix},\begin{pmatrix} 1 & 0 \\ -9 & 1 \end{pmatrix}$ in the two cases (Lemma \ref{lem:findmat}).  We now write the value of $x(\tau)$ in the form $f(p \tau_0)$ where $f$ is a modular function and $\tau_0$ does not depend on $p$.  

The function  
\begin{equation}  \label{eqn:fz273}
f(z) \colonequals \displaystyle{\frac{\eta(27z)}{\eta(3z)}} 
\end{equation}
is a modular unit on $\Gamma_0(81)$ by Ligozat's criterion.  

\begin{lem} \label{l:j}
Let $\pdot \in \Z$ satisfy $\pdot p \equiv 4,1 \psmod{27}$  in case \textup{1} or case \textup{2}, respectively.  Then
\begin{equation} \label{e:xz}
 x(\tau) = e^{\pi i/6} \sqrt{3} \, \frac{f(p(\omega-\pdot)/27)f(p\omega/9)}{f(p(\omega-\pdot)/9)}
\end{equation}
where $f$ is defined in \textup{\eqref{eqn:fz273}}.
\end{lem}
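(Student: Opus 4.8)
The plan is to reduce the identity \eqref{e:xz} to a finite set of transformation-law computations for the Dedekind $\eta$-function, exploiting that $x(z)$ is the $\eta$-product \eqref{eqn:xzdfe} and that $\tau = M(\omega p/9)$ with $M \in \SL_2(\Z)$ explicitly given by Lemma~\ref{lem:findmat}. First I would substitute $\tau = M(\omega p/9)$ into each of the four $\eta$-factors $\eta(9z), \eta(27z), \eta(3z), \eta(81z)$ appearing in $x$. Each $\eta(m\tau)$ with $\tau = (a\tau' + b)/(c\tau' + d)$ can be rewritten, after clearing the scaling $m$ inside the matrix, as $\eta$ evaluated at $M_m(\omega p/9)$ for an explicit $M_m \in \GL_2^+(\Q)$; factoring $M_m = \gamma_m \cdot \mathrm{diag}(\ast,\ast) \cdot (\text{upper triangular})$ via the theory of the $\eta$-multiplier (Weber's / the standard $\eta$-transformation formula under $\SL_2(\Z)$, together with $\eta(z+1) = e^{\pi i/12}\eta(z)$ and $\eta(-1/z) = \sqrt{-iz}\,\eta(z)$) expresses $\eta(m\tau)$ as $(\text{a root of unity}) \cdot (\text{a power of } (c\tau'+d)^{1/2}) \cdot \eta(\text{rational multiple of } \omega p/9)$. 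Collecting the four factors, the half-integral automorphy factors $(c\tau'+d)^{1/2}$ must cancel in the ratio defining $x$ (they had to, since $x$ is a modular \emph{function}), leaving $x(\tau)$ as a root of unity times a ratio of $\eta$'s evaluated at arguments of the form $p \cdot (\text{fixed rational combination of }\omega)/9$ or $/27$.

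The second step is bookkeeping of the arguments. Writing everything over denominator $27$, the argument $\omega p/9 = 3\omega p/27$, and shifting $3\omega p \bmod 27$ using $\eta(z+1) = e^{\pi i/12}\eta(z)$, one wants to match the three $f$-arguments on the right-hand side of \eqref{e:xz}, namely $p(\omega - \pdot)/27$, $p\omega/9$, and $p(\omega - \pdot)/9$, recalling $f(z) = \eta(27z)/\eta(3z)$. Here the congruence condition $\pdot p \equiv 4, 1 \pmod{27}$ (case 1, case 2) is exactly what is needed so that the integer shifts introduced when reducing the various numerators modulo $27$ (equivalently, the numerators modulo $9$ inside $\eta(27 \cdot)$ and modulo $3$ inside $\eta(3 \cdot)$) line up consistently across all factors; this is why $\pdot$ enters. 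I would simply run this reduction in each of the two cases separately.

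The final step is to pin down the leading constant $e^{\pi i/6}\sqrt{3}$. All the accumulated roots of unity from the $\eta$-multiplier system multiply to some $24$th root of unity, and the $\sqrt{3}$ arises from the scaling factors $\sqrt{m}$ (with $m \in \{3,9,27,81\}$) that appear when one pulls the dilation $z \mapsto mz$ through the $\eta$-transformation — in the ratio these contribute $\sqrt{9 \cdot 27}/\sqrt{3 \cdot 81} = \sqrt{243/243} = 1$ from the "naive" scalings but a residual $\sqrt{3}$ survives from the reshuffling of the matrices $M_m$. Rather than track the phase through every sub-step, the clean move is: establish \eqref{e:xz} up to an \emph{a priori unknown} constant $\lambda \in \C^\times$ (independent of $p$, since all $p$-dependence has been isolated into the arguments $p\tau_0$), and then determine $\lambda = e^{\pi i/6}\sqrt{3}$ by evaluating both sides numerically at one convenient prime (say $p = 7$ or by a formal $q$-expansion comparison in the variable $q^{1/27}$, matching enough terms). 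I expect this phase/constant computation — keeping the $24$th-root-of-unity multiplier exactly right through four $\eta$-transformations in two cases — to be the main obstacle; everything else is the routine but careful matrix arithmetic of decomposing each $M_m$ into $\SL_2(\Z)$ times upper-triangular, which is mechanical once set up.
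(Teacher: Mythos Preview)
Your approach is essentially the paper's: for each $m \in \{3,9,27,81\}$ write $m\tau = \gamma_m(\tau_m')$ with $\gamma_m \in \SL_2(\Z)$ explicit (e.g.\ $81\,M(\omega p/9) = T^{-9}S\bigl((\omega p-1)/9\bigr)$ in case~2), apply the $\eta$-transformation law, take the ratio so the automorphy factors collapse to $\sqrt{3}$, and then shift the resulting arguments by integers to reach the $f$-arguments on the right of \eqref{e:xz}.

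One point to tighten: your claim that the leftover constant $\lambda$ is $p$-independent does not follow merely from ``all $p$-dependence has been isolated into the arguments $p\tau_0$''. The integer shifts in your second step---for instance $(\omega p-1)/9 = p(\omega-\pdot)/9 - k$ with $k=(1-\pdot p)/9$---introduce phases $e^{-\pi i k/12}$ that \emph{a priori} vary with $p$. The paper tracks these exactly and uses $\pdot p \equiv 1 \pmod{27}$ (case~2) to get $k \in 3\Z$, whence the combined exponent $k + \pdot p = 1 - 8k \equiv 1 \pmod{24}$ and the net phase is genuinely fixed. Once you have carried out this mod-$24$ verification you have essentially computed $\lambda$, so the numerical evaluation you propose is redundant rather than a shortcut---but the overall strategy is correct.
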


\begin{proof}
 We show the calculation for case 2; case 1 is similar.  With $M=\begin{pmatrix} 1 & 0 \\ -9 & 1 \end{pmatrix}$ and all $z \in \frakH$,
\begin{equation}
 81M(z) = \frac{9\omega p}{-\omega p+1} = \frac{9}{-\omega p+1}-9 = (T^{-9}S)((\omega p-1)/9)
 \end{equation}
where $S=\begin{pmatrix} 0 & -1 \\ 1 & 0 \end{pmatrix}$ and $T=\begin{pmatrix} 1 & 1 \\ 0 & 1 \end{pmatrix}$. Similarly
 \begin{equation}
 \begin{aligned}
 27M(z) &= (T^{-3}S)((\omega p-1)/3), \\
 9M(z) &=  (STS)(\omega p), \\
 3M(z) &=  (ST^3S)(\omega p/3).
 \end{aligned}
 \end{equation}
 By the transformation formulas for the Dedekind $\eta$-function
\begin{equation} 
\eta(T(z))=\eta(z+1)=e^{\pi i/12}\eta(z), \quad \eta(S(z))=\eta(-1/z) = \sqrt{-iz}\, \eta(z),
\end{equation}
we calculate:
\begin{equation} \label{eqn:812793M}
\begin{aligned}
 \eta(81 \tau) &= e^{\pi i/4}\sqrt{-i(\omega p-1)/9}\, \eta((\omega p-1)/9) \\
 \eta(27 \tau) &= e^{-\pi i/4}\sqrt{-i(\omega p-1)/3}\, \eta((\omega p-1)/3) \\
 \eta(9 \tau) &= e^{-\pi i/6}\sqrt{-i(\omega p-1)}\, \eta(\omega p) \\
 \eta(3 \tau) &= \sqrt{-i(\omega p-1)}\, \eta(\omega p/3).
\end{aligned}
\end{equation}
Plugging \eqref{eqn:812793M} into $x(\tau)$ as in \eqref{eqn:xzdfe} and rewriting slightly gives
\begin{equation} \label{eqn:xtauM91}
x(\tau)=x(M(\omega p/9)) = e^{\pi i/3} \sqrt{3} \cdot \frac{\eta(\omega p)}{\eta(\omega p-1)/9)} \cdot \frac{\eta((\omega p-1)/3)}{\eta(3\omega p)} \cdot \frac{\eta(3\omega p)}{\eta(\omega p/3)}.
 \end{equation}

Then with $\pdot=7,4$ as $p=4,7\psmod{9}$, let $k \colonequals (1-\pdot p)/9 \in 3\Z$.  
\begin{equation} \label{eqn:omegap19}
\frac{\omega p-1}{9}=\frac{p(\omega-\pdot)}{9}-k. 
\end{equation}
Using the transformation formula and \eqref{eqn:omegap19} gives:
\begin{equation}  
\frac{\eta(\omega p)}{\eta((\omega p-1)/9)} = \frac{e^{\pi i (jp)/12}\,\eta(p(\omega - \pdot))}{e^{\pi i(-k)/12}\,\eta(p(\omega -\pdot)/9)} = e^{\pi i/12} f(p(\omega-\pdot)/27)
\end{equation}
since $k+jp=1-8k \equiv 1 \psmod{24}$.  Similarly,
\begin{equation}
\frac{\eta((\omega p-1)/3)}{\eta(3\omega p)} = e^{-\pi i/4} \frac{1}{f(p(\omega-\pdot)/9)}.
\end{equation}
Plugging these into \eqref{eqn:xtauM91}, we obtain (\ref{e:xz}).
\end{proof}

\subsection{Reduction of $R$ modulo $p$}

We will use the tidy expression \eqref{e:xz} to reduce our mock Heegner points modulo $p$.  
The key result that allows  this is the following proposition.

\begin{prop} \label{shim2}   
Let $f(z)=\sum_n a_n q^n$ 
be a nonconstant modular function on $\Gamma_0(N)$ with $a_n \in \Z$ such that $f$ only has poles at cusps.  
Let $K$ be a quadratic imaginary field and $p$ a prime that splits in $K$.
Let $\tau \in \frakH$ have image in $X_0(Np)$ corresponding to a cyclic $Np$-isogeny $\varphi\colon E_1 \to E_2$ of elliptic curves with CM by orders in $K$.   Suppose that
the index $[\Z_K: \End(E_1)]$ is not divisible by $p$ but that $[\Z_K : \End(E_2)]$ is divisible by $p$.

  Let $H$ be the ring class field of $K$ associated to $\End(\varphi)$ and let $\Z_{H,(p)}$ denote the ring of $p$-integral elements of $H$.  Then 
$f(\tau), f(p \tau) \in H^\times$  are integral at each prime of $H$ above $p$ and satisfy the congruence
\begin{equation} 
f(\tau) \equiv f(p\tau)^p \pmod{p\Z_{H,(p)}}.
\end{equation} 
\end{prop}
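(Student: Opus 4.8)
The plan is to reduce the congruence to the classical Kronecker congruence relating the $j$-invariant and the Hecke correspondence $T_p$, transported through the $\eta$-product data. First I would recall the content of Kronecker's congruence in the form adapted to CM points: if $E$ has CM by an order $\mathcal{O} \subseteq \Z_K$ with $p \nmid [\Z_K : \mathcal{O}]$, and $\varphi\colon E \to E'$ is a cyclic $p$-isogeny, then at a prime $\mathfrak{p}$ of $H$ above $p$ one of the two mod-$\mathfrak{p}$ reductions of the pair $(E,E')$ is Frobenius-related to the other; concretely, either $\varphi$ or its dual reduces to the relative Frobenius, and consequently the reduction $\widetilde{E'}$ is either $\widetilde{E}^{(p)}$ or $\widetilde{E}^{(1/p)}$. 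The hypothesis that $[\Z_K : \End(E_2)]$ \emph{is} divisible by $p$ while $[\Z_K : \End(E_1)]$ is not is exactly what pins down \emph{which} of the two alternatives occurs: the isogeny $E_1 \to E_2$ of degree $p$ is the "descending" one in the $p$-isogeny volcano, so modulo $\mathfrak p$ it must be (the dual of) Frobenius, i.e. $\widetilde{E_2} \cong \widetilde{E_1}^{(p)}$, with $\widetilde{\varphi}$ becoming the $p$-power Frobenius up to an isomorphism defined over the residue field.

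Next I would translate this into a statement about modular functions on $X_0(Np)$ versus $X_0(N)$. The $Np$-isogeny $\varphi$ factors as $E_1 \xrightarrow{\psi} E_1' \xrightarrow{\phi} E_2$ where $\psi$ is cyclic of degree $N$ and $\phi$ is the cyclic $p$-isogeny; then $\tau \in \frakH$ maps to the point $[E_1' \to E_2]$ of $X_0(p)$, while $p\tau$ maps to $[E_1 \to E_1']$ — this is precisely the relation $\phi_{p\tau}$ has kernel generated by $1/p$ and $\phi_\tau$ has kernel generated by $1/(Np)$, and a direct lattice computation shows that multiplying $\tau$ by $p$ swaps the source/target roles appropriately. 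Since $f$ is a modular function on $\Gamma_0(N)$ depending only on the underlying elliptic curve data (with its $\Gamma_0(N)$-level structure), $f(\tau)$ is $f$ evaluated at $E_1'$ and $f(p\tau)$ is $f$ evaluated at $E_1$ — more carefully, at the two curves appearing in the $p$-isogeny $\phi\colon E_1' \to E_2$ or $E_1 \to E_1'$; one checks via the $q$-expansion (noting $q(p\tau) = q(\tau)^p$) that $f(p\tau)$ is the value of $f$ at a curve whose reduction mod $\mathfrak p$ is the $p$-power Frobenius twist of the reduction of the curve for $f(\tau)$.

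Then the congruence follows by combining two facts: (i) $f(\tau), f(p\tau)$ lie in $H$ and are $\mathfrak p$-integral units — integrality because $f$ has poles only at cusps and CM points are not cusps and do not reduce to cusps mod $\mathfrak p$ (as $E_1$ has good reduction there, using $p \nmid [\Z_K:\End(E_1)]$), and the unit property because the $\eta$-products in play are modular \emph{units}, whose CM values are units (a Siegel-unit/elliptic-unit argument, or simply that $f$ and $1/f$ are both pole-free away from cusps); and (ii) for a curve $\widetilde{A}$ over $\overline{\F}_p$ and its Frobenius twist $\widetilde{A}^{(p)}$, any modular function $f$ with integral $q$-expansion satisfies $f(\widetilde{A}^{(p)}) = f(\widetilde A)^p$ in $\overline{\F}_p$, because Frobenius acts on $q$-expansions by $q \mapsto q^p$ and on coefficients trivially mod $p$ (Fermat), so $\widetilde f \circ \mathrm{Frob} = \widetilde f^{\,p}$ as functions on the ordinary (or appropriate) locus of the mod-$p$ modular curve. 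Reducing $f(\tau) \equiv f(p\tau)^p$ mod $\mathfrak p$ is then exactly fact (ii) applied to the reduction identified in the second paragraph; since this holds at every $\mathfrak p \mid p$ and the elements are $p$-integral, it upgrades to a congruence mod $p\Z_{H,(p)}$.

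The main obstacle, I expect, is step two: making rigorous and canonical the identification of $f(\tau)$ and $f(p\tau)$ as values of $f$ at the \emph{correct} pair of CM elliptic curves with level structure, and in particular verifying that the reduction of the degree-$p$ isogeny linking them is genuinely the relative Frobenius rather than its dual — i.e., ruling out the "ascending" alternative. This is where the conductor hypothesis $p \mid [\Z_K:\End(E_2)]$, $p\nmid[\Z_K:\End(E_1)]$ must be used decisively, via Deuring's theory of the reduction of CM elliptic curves (the reduction of $\End$ and the behavior of isogenies in the supersingular/ordinary reduction dichotomy); one must check that $E_1$ has ordinary reduction (since its endomorphism order is maximal at $p$ and $p$ splits in $K$) and track how the $p$-isogeny interacts with the canonical subgroup / kernel of Frobenius. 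Handling the normalization constants (the leading roots of unity and the $\sqrt 3$ in Lemma~\ref{l:j}) is purely bookkeeping and does not affect the mod-$p$ statement, since those constants are $p$-adic units, but one should confirm they do not secretly obstruct integrality.
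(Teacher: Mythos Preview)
Your approach is essentially sound but genuinely different from the paper's. The paper does \emph{not} argue via Deuring's reduction theory or isogeny volcanoes. Instead it proves a general geometric principle (Proposition~\ref{shim}): for a proper flat curve $X$ over a DVR $R$ whose special fiber is semistable with two smooth components, if $f,g$ are rational functions regular away from a divisor $D$ and whose $q$-expansions at $\infty\in D(R)$ lie in $R[\![q]\!][1/q]$ and agree modulo $p$, then $f(u)\equiv g(u)\pmod{pR}$ for any $u\in U(R)$ reducing into the same component as $\infty$. The proof of this principle is a short order-of-vanishing argument at the generic point of that component, applied to $f$, $g$, and $(f^p-g)/p$. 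Proposition~\ref{shim2} is then deduced by taking $X=X_0(Np)$, $g(z)=f(pz)$, and $u=W_p(\tau)$; the conductor hypothesis is used to show that $\tau$ reduces into the \'etale component (by checking that if it reduced into the multiplicative component, the $p$-part of $\ker\varphi$ would be the canonical subgroup and hence stable under $\Z_{K,m}$, forcing $p\nmid m_2$), so $W_p(\tau)$ lands on the component of $\infty$.

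Your route---identifying the $p$-isogeny $\langle\tau\rangle\to\langle p\tau\rangle$ as descending in the volcano, hence with \'etale kernel, hence with dual reducing to Frobenius, so $\widetilde{\langle\tau\rangle}\cong\widetilde{\langle p\tau\rangle}^{(p)}$ and therefore $f(\tau)\equiv f(p\tau)^p$---is the classical CM-theoretic argument and works, but your paragraph~2 is muddled: the factorization through $E_1'$ is a red herring (the relevant $p$-isogeny is simply $\langle\tau\rangle\to\langle p\tau\rangle$ with kernel $\langle 1/p\rangle$, and $f(\tau)$, $f(p\tau)$ are the values at the $\Gamma_0(N)$-points $\langle\tau\rangle$ and $\langle p\tau\rangle$ directly), and you state the Frobenius direction backwards (you say $\widetilde{\langle p\tau\rangle}$ is the Frobenius twist of $\widetilde{\langle\tau\rangle}$, which would give $f(p\tau)\equiv f(\tau)^p$; the descending direction gives the opposite). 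You recover the correct congruence in paragraph~3, but the internal inconsistency should be fixed. Also, your claim that $f(\tau),f(p\tau)$ are $\mathfrak p$-\emph{units} overshoots the proposition (which only asserts $p$-integrality) and your justification via Siegel units does not apply to a general $f$ with poles only at cusps.

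What each buys: the paper's argument is cleaner and more robust---it isolates a purely geometric statement independent of CM, and the Atkin--Lehner flip is an elegant device for landing on the $\infty$-component. Your argument is more explicit about \emph{why} the congruence holds arithmetically (Frobenius on the volcano), which is conceptually satisfying but requires more careful tracking of level structures and isogeny directions, as your paragraph~4 correctly anticipates.
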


Proposition~\ref{shim2} is proved in the appendix.  Using the proposition, we now finish the proof of our main result (Theorem~\ref{t:main}) by showing that $R$ is not torsion when 3 is not a cube modulo $p$.  We describe the case $j = 7$ (see Lemma~\ref{l:j}), the argument for the other cases only differing by constant factors (specifically, an explicit root of unity only depending on $j$).  We continue to use the model 
$y^2+y=3x^3-1$ for the curve $E_1$.
Recall from \eqref{r:xq} and Lemma \ref{l:j} that for the point  $Q \in E_1(H_{3p})$
we have 
\[ x(Q)= \frac{x(P)}{\sqrt[3]{3}} =  \frac{x(\tau)}{\sqrt[3]{3}} = e^{\pi i/6} \sqrt[6]{3} \, \frac{f(p(\omega-7)/27) f(p\omega/9)}{f(p(\omega-7)/9)}.\]
It is elementary to see that the assumptions of Proposition~\ref{shim2} are satisfied by $f$ and the points $\tau = \omega/9, (\omega - 7)/27, (\omega - 7)/9$.
The proposition therefore implies that
\begin{equation}
 x(Q)^p \equiv (e^{\pi i/6} \sqrt[6]{3} )^p \frac{f((\omega-7)/27) f(\omega/9)}{f((\omega-7)/9)} \pmod{p \overline{\Z}}. \label{e:evalf} 
 \end{equation}

We can evaluate the constant on the right in (\ref{e:evalf}) explicitly.

\begin{lem} \label{lem:compf27}
We have
\[ 
  \frac{f((\omega-7)/27)f(\omega/9)}{f((\omega-7)/9)} = -e^{\pi i/6}\frac{1}{\sqrt[6]{3}}. 
\]
\end{lem}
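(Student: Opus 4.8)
The plan is to evaluate the three quantities $f((\omega-7)/27)$, $f(\omega/9)$, and $f((\omega-7)/9)$ individually (or their ratio directly) by reducing each argument to a standard point via the modular transformation properties of $f(z) = \eta(27z)/\eta(3z)$, and then combining. Since $f$ is a modular unit on $\Gamma_0(81)$, its value at a CM point lies in an abelian extension of $K$; but in fact we expect each of these arguments to be $\Gamma_0(81)$-equivalent (or equivalent under a larger group after tracking $\eta$-multiplier cocycles) to a CM point where the $\eta$-quotient can be evaluated in closed form using the classical special values of $\eta$ at $\omega$ and at $\omega/3$, $3\omega$, etc. Concretely, I would first find matrices in $\SL_2(\Z)$ (or in the normalizer of $\Gamma_0(81)$) carrying $27z$, $3z$ for $z = (\omega-7)/27$, $\omega/9$, $(\omega-7)/9$ into the $\SL_2(\Z)$-orbit of $\omega$, exactly as was done in the proof of Lemma~\ref{l:j} with the matrices $S$, $T$; this is a finite bookkeeping computation since $27 \cdot (\omega-7)/27 = \omega - 7 = T^{-7}(\omega)$ and $3 \cdot (\omega - 7)/27 = (\omega-7)/9$, and similarly for the other points.

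The key input is the transformation law $\eta(T(z)) = e^{\pi i/12}\eta(z)$ and $\eta(S(z)) = \sqrt{-iz}\,\eta(z)$, together with the homogeneity factors $\sqrt{-iz}$ that appear — these must be tracked carefully because they contribute the powers of $\sqrt{3}$ and the root of unity $e^{\pi i/6}$ in the final answer. After writing each of $\eta(27z)$, $\eta(3z)$ in terms of $\eta$ evaluated at a point in the fundamental domain, the quotients $\eta(27z)/\eta(3z)$ simplify: many $\eta$-values at the reduced points cancel between numerator and denominator of the triple ratio, since $(\omega-7)/9$ appears both as $3 \cdot (\omega-7)/27$ and (after reduction) in relation to $\omega/3$-type points. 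I would organize this so that, after cancellation, what remains is a product of explicit roots of unity (from the $e^{\pi i/12}$ factors, whose exponents are controlled mod $24$) times a power of $\sqrt{3}$ coming from the $\sqrt{-iz}$ factors evaluated at the relevant multiples of $\omega$. Matching real and imaginary parts — or simply computing the argument and absolute value — then pins down the answer as $-e^{\pi i/6}/\sqrt[6]{3}$.

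As a sanity check and to avoid sign errors, I would verify the identity numerically to high precision (both the modulus $3^{-1/6}$ and the phase, which is $e^{\pi i/6 + \pi i} = e^{7\pi i/6}$), exactly in the spirit of the numerical verifications already used in the proofs of Propositions~\ref{modularparam} and~\ref{p:vtact}. The main obstacle I anticipate is purely the careful propagation of the multiplier system: the $e^{\pi i/12}$ from each $T$ and the branch of $\sqrt{-iz}$ from each $S$ must be combined consistently across all six $\eta$-evaluations (three arguments, each giving a numerator and denominator $\eta$), and an off-by-one in any $T$-power exponent shifts the final root of unity by $e^{\pi i/12}$. Everything else — the reduction matrices, the cancellations — is routine, and the numerical check will catch any such slip.
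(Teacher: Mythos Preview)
Your reduction via the $\eta$-transformation law is the right first move, but the key claim---that after bookkeeping ``what remains is a product of explicit roots of unity \dots\ times a power of $\sqrt{3}$''---is false. Carrying out exactly the reduction you describe, the six $\eta$-values sort into $\SL_2(\Z)$-orbits of conductor~$1$ (namely $\omega-7$ and $(\omega-7)/3$, both equivalent to $\omega$) and conductor~$3$ (namely $3\omega$, $\omega/3$, $3\omega-21$, $(\omega-7)/9$, all equivalent to $3\omega$). The conductor-$1$ contributions sit in the numerator and the conductor-$3$ contributions pile up in the denominator, so after the transformation-law dust settles one is left with a root of unity times $3^{-1}\,\eta(\omega)^2/\eta(3\omega)^2$. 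This ratio is \emph{not} accessible from the $S$,$T$ transformation rules alone: $\omega$ and $3\omega$ lie in different $\SL_2(\Z)$-orbits, and no amount of multiplier bookkeeping will relate $\eta(\omega)$ to $\eta(3\omega)$.

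The paper supplies exactly the missing ingredient. It observes that $h(z)\colonequals f(z/3)^3$ is a modular \emph{unit} on $\Gamma_0(9)$, so by the theory of modular units its value at the CM point $\omega/3$ (of conductor~$3$) is a $3$-unit in the ring class field $H_3=K$; hence $h(\omega/3)$ lies in the finite set $\{\zeta\cdot(\sqrt{-3})^m:\zeta\in\mu_6,\ m\in\Z\}$. A numerical computation then \emph{rigorously} identifies which element it is---this is a proof step, not merely a sanity check as in your plan. From $h(\omega/3)$ one extracts $f(\omega/9)$ (equivalently, $\eta(\omega)/\eta(3\omega)$), and the same modular-unit argument handles $f((\omega-7)/9)$ and $f((\omega-7)/27)$. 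Without this finiteness input your argument cannot close.
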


\begin{proof}
The function $h(z) := f(z/3)^3$ is  $\Gamma_0(9)$-invariant by Ligozat's criterion.
The point on $X_0(9)$ associated to $\omega/3 \in \frakH$ corresponds to the normalized isogeny $\la \omega/3 \ra \to \la 3\omega \ra$ of conductor $3$.  By the theory of modular units, $h(\omega/3)$ is a $3$-unit in the ring class field $H_3 = K$, and hence is equal to a unit in $\Z_K^\times$ times a power of $\sqrt{-3}$.  Numerically, we find that $h(\omega/3)=3\sqrt{-3}$ to several hundred digits of accuracy, so this must be an equality. We then calculate that 
\begin{equation} 
f(\omega/9)=e^{-\pi i/6}/\sqrt{3}.
\end{equation}
In a similar way, we compute
\[ f((\omega-7)/9)=-\omega^2/\sqrt[3]{9}, \quad f((\omega-7)/27)=-\omega/\sqrt[3]{3}, \]
and putting these together gives the result.
\end{proof}

Combining  \eqref{e:evalf} and Lemma~\ref{lem:compf27}, we obtain
\begin{equation}  \label{eqn:xtaupi6}
x(Q)^p \equiv (e^{\pi i/6} \sqrt[6]{3} )^p \left(\frac{-e^{\pi i/6}}{\sqrt[6]{3}}\right) = \omega^2 \left(-3\right)^{(p-1)/6} \pmod{p\overline{\Z}}. 
\end{equation}

Since $p \equiv 1 \pmod{3}$, we have $p\Z_L = (\frakp\overline{\frakp})^3$ where $\Z_L$ is the ring of integers of $L = K(\sqrt[3]{p})$ and each of $\frakp,\overline{\frakp}$ have residue field $\F_p$.  We  consider the pair
\begin{equation} 
(R\bmod{\frakp}, R\bmod\overline{\frakp}) \in E_1(\F_p)^2. 
\end{equation}

\begin{prop} \label{prop:3notpmainprop}
If $3$ is not a cube modulo $p$, then the image of $R \in E_1(L)$ in
$E_1(\F_p)^2$ is not equal to the image of any torsion point in $E_1(L)$, and hence $R$ is nontorsion.
\end{prop}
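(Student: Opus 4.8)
The plan is to compute the reduction of $R$ modulo each of $\frakp$ and $\overline{\frakp}$ explicitly, using the formula \eqref{eqn:xtaupi6}, and then to compare this with the possible reductions of torsion points. First I would recall that $R = \Tr_{H_{3p}/L} Q$, so modulo a prime $\frakP$ of $H_{3p}$ above $\frakp$ one needs to understand the Frobenius action on the conjugates $\varsigma(Q)$ for $\varsigma \in \Gal(H_{3p}\,|\,L)$. The key point is that $p$ is ramified in $L/K$ (totally, with $p\Z_L = (\frakp\overline{\frakp})^3$ and residue field $\F_p$), and the extension $H_{3p}/L$ has degree $(p-1)/3$ prime to $p$; the decomposition group of a prime of $H_{3p}$ above $\frakp$ inside $\Gal(H_{3p}\,|\,L)$ is cyclic of order equal to the residue degree. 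Combining the Shimura reciprocity description of the $\Gal(H_{3p}\,|\,L)$-action on $Q$ with the congruence $x(Q)^p \equiv \omega^2(-3)^{(p-1)/6} \pmod{p\overline{\Z}}$ from \eqref{eqn:xtaupi6}, one computes the $x$-coordinate of $R \bmod \frakp$ (and symmetrically $R \bmod \overline{\frakp}$, which should come out to the conjugate value); the upshot is that $R \bmod \frakp$ is a specific point of $E_1(\F_p)$ whose $x$-coordinate is determined by whether $(-3)^{(p-1)/6}$, equivalently whether $3$, is a cube modulo $p$.

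Next I would enumerate the torsion of $E_1(L)$. Since $E_1 = E_1/\Q$ has $E_1(\Q) \simeq \Z/3\Z$ and $E_1(K)$ gains no new torsion beyond the obvious (the $3$-torsion is $\{\infty, (1,1), (1,-2)\}$ — rational here because of the model $y^2+y=3x^3-1$, wait, one must check: these are the points we already used in the descent), one argues that $E_1(L)_{\mathrm{tors}}$ is still small — at most the $3$-torsion, possibly with a $2$- or $3$-power factor that can be bounded by reduction at an auxiliary prime of good reduction. Having pinned down the finite list of torsion points, I would reduce each one modulo $\frakp$ and modulo $\overline{\frakp}$: since these points are defined over $\Q$ (or at worst over $K$), their reductions are "constant" — in particular the pair $(T \bmod \frakp, T \bmod \overline{\frakp})$ always has equal or Galois-conjugate coordinates in a controlled way. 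The final step is the comparison: one shows that the $x$-coordinate of $R \bmod \frakp$ computed above, when $3$ is not a cube mod $p$, cannot match the $x$-coordinate of the reduction of any of these torsion points. This is where the hypothesis is used decisively: if $3$ \emph{were} a cube mod $p$ the value $(-3)^{(p-1)/6}$ would be $\pm 1$ and the reduction could (and in fact does, cf.\ the examples $p = 61, 193$) coincide with a torsion reduction, whereas if $3$ is not a cube the quantity $(-3)^{(p-1)/6}$ is a primitive cube root of unity in $\F_p^\times$, forcing $R \bmod \frakp$ off the list of torsion reductions. Since a torsion point reducing to a nontorsion-looking point is impossible (reduction is injective on prime-to-$p$ torsion, and the $p$-part is handled separately or is trivial here), $R$ must be nontorsion.

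The main obstacle I anticipate is the bookkeeping in the first step: correctly tracking the Shimura-reciprocity Galois action of $\Gal(H_{3p}\,|\,L)$ on the conjugates of $Q$ and assembling the trace $R$ modulo $\frakp$, including the precise roots of unity (the $e^{\pi i/6}$, $\sqrt[6]{3}$ factors and the $\omega^2$) so that the final congruence for $x(R \bmod \frakp)$ is unambiguous. A secondary subtlety is justifying that the reduction map is well-defined and injective on the relevant torsion — i.e.\ that $E_1$ has good reduction at $\frakp$ (equivalently at $p$; one must recall $E_1$ has conductor $27$, so $p \neq 3$ gives good reduction) and that no $p$-torsion enters over $L$ (which follows since $E_1(L)_{\mathrm{tors}}[p^\infty]$ injects into $E_1(\F_p)$ only after checking tame ramification, but more simply the $p$-torsion of a CM curve over a field where $p$ ramifies can be controlled directly). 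Once these are in place, the contradiction with $3$ not being a cube mod $p$ is immediate from \eqref{eqn:xtaupi6}.
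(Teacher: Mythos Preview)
Your overall strategy---reduce $R$ modulo the two primes above $p$ via the congruence \eqref{eqn:xtaupi6}, then compare with the reductions of the finitely many torsion points---is exactly the paper's approach. But you have the ramification of $p$ in $H_{3p}/L$ wrong, and this is the step that makes the whole computation go through.

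You write that the decomposition group of a prime above $\frakp$ in $\Gal(H_{3p}\,|\,L)$ is ``cyclic of order equal to the residue degree'', and then propose to track the individual conjugates $\varsigma(Q)$ via Shimura reciprocity. In fact $p$ is \emph{totally ramified} in $H_{3p}/L$ (indeed in $H_{3p}/K$): the primes $\pi,\overline{\pi}$ above $p$ in $K$ are totally ramified both in $L/K$ (index $3$) and in $H_p/K$ (index $(p-1)/3$), and since $p\equiv 4,7\pmod 9$ forces $3\nmid (p-1)/3$, the compositum $H_{3p}=L\cdot H_p$ has ramification index $p-1=[H_{3p}:K]$. Consequently every $\varsigma\in\Gal(H_{3p}\,|\,L)$ lies in the inertia group, so $\varsigma(Q)\equiv Q$ modulo any prime above $p$, and immediately
\[
R=\Tr_{H_{3p}/L}Q\equiv \tfrac{p-1}{3}\,Q\equiv \pm Q \quad\text{in } E_1(\F_p)^2.
\]
No Shimura reciprocity on the $(p-1)/3$ conjugates is needed; your proposed detour is both unnecessary and rests on a false premise about the decomposition group.

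Two smaller corrections. First, $(-3)^{(p-1)/6}$ is always a cube root of unity in $\F_p^\times$ (since $-3$ is a square), equal to $1$ (not $\pm 1$) exactly when $3$ is a cube. Second, $E_1(L)_{\textup{tors}}=E_1[3]$ is the full $3$-torsion over $K$, with $x$-coordinates $0,1,\omega,\omega^2$; their images in $\F_p\times\F_p$ are $(0,0)$, $(1,1)$, or $(u,u^2)$ with $u$ a primitive cube root of unity. The congruence \eqref{eqn:xtaupi6} says that when $3$ is not a cube, $x(Q)$ reduces to a pair of the form $(u,1)$ or $(1,u)$---and this shape never occurs for a torsion point, which is the contradiction. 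Your description of the torsion reductions as ``constant'' misses the $(u,u^2)$ case coming from $x=\omega^{\pm 1}$, which is precisely the case that matches $x(Q)$ when $3$ \emph{is} a cube.
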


Before proving this proposition, we need one final lemma.

\begin{lem} \label{l:torsion} In the coordinates $y^2 + y = 3x^3 - 1$ for $E_1$, we have
\[E_1(L)_{\textup{tors}} = E_1(K)_{\textup{tors}}  = E_1[3] = \{ \infty, (0, \omega), (0, \omega^2),  (\omega^i,1), (\omega^i, -2): i = 0, 1, 2 \} \simeq (\Z/3\Z)^2. \]
\end{lem}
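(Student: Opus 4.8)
The plan is to pin down $E_1(K)_{\textup{tors}}$ first, then show the torsion does not grow in the degree-$3$ extension $L/K$. For the curve $E_1\colon y^2+y=3x^3-1$ over $\Q$, one computes directly that $E_1(\Q)_{\textup{tors}}=\{\infty\}$: this is the statement, recalled in the introduction, that $E_n(\Q)_{\textup{tors}}=\{\infty\}$ for cubefree $n\ge 3$ (here $n=1$ in the model $x^3+y^3=z^3$, but our $E_1$ above is the twist by $\sqrt[3]{3}$ of $E_9$; in any case $E_1$ as written is an elliptic curve over $\Q$ of the same isogeny class as $E_3$, and a short Nagell--Lutz or reduction argument gives $E_1(\Q)_{\textup{tors}}=\{\infty\}$). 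Over $K=\Q(\omega)$, the curve has CM by $\Z_K=\Z[\omega]$, with $\omega$ acting by $(x,y)\mapsto(\omega x,y)$; the three points $\infty,(0,\omega),(0,\omega^2)$ visibly form a subgroup isomorphic to $\Z/3\Z$ (the $3$-torsion being exactly the kernel of the $\sqrt{-3}$-isogeny, one checks $(0,\omega),(0,\omega^2)$ are $3$-torsion by either the formal group or a direct division-polynomial computation), and applying $\omega$ to $(0,\omega)$ we see all nine points $\{\infty,(0,\omega),(0,\omega^2),(\omega^i,1),(\omega^i,-2):i=0,1,2\}$ lie in $E_1[3]$. Since $E_1[3]\simeq(\Z/3\Z)^2$ has exactly nine points, this accounts for all of $E_1[3]$, and these nine points are evidently $K$-rational, so $E_1[3]\subseteq E_1(K)$.

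Next I would show $E_1(K)_{\textup{tors}}=E_1[3]$, i.e.\ there is no larger torsion. The cleanest route is reduction modulo a prime of good reduction: pick two rational primes $\ell$ split in $K$ (so $\Z_K/\mathfrak l\cong\F_\ell$) at which $E_1$ has good reduction — for instance $\ell=7$ and $\ell=13$ — and use the injectivity of reduction on the prime-to-$\ell$ torsion to bound $\#E_1(K)_{\textup{tors}}$ by $\gcd$ or a common divisor of $\#E_1(\F_7)$ and $\#E_1(\F_{13})$ (computed by point-counting on $y^2+y=3x^3-1$). Two well-chosen primes will force $\#E_1(K)_{\textup{tors}}\mid 9$, hence $E_1(K)_{\textup{tors}}=E_1[3]$.

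Finally, I would rule out growth of torsion in $L=K(\sqrt[3]{p})$. If $E_1(L)_{\textup{tors}}\supsetneq E_1[3]$, then since $[L:K]=3$ is prime and $E_1(K)_{\textup{tors}}=E_1[3]$, there would be a point of order $9$ or of order $3\ell$ (some prime $\ell$) defined over $L$ but not over $K$, and the Galois orbit of such a point under $\Gal(L/K)\simeq\Z/3\Z$ would have size $3$, giving a subgroup scheme argument: the new torsion would generate, together with $E_1[3]$, a $\Gal(\Qbar/K)$-stable group on which the cyclic group $\Gal(L/K)$ acts; comparing with the CM action of $\Z_K$ (every endomorphism is $Z\mapsto aZ$ with $a\in\Z_K$ since $E_1(K)$ has full $3$-torsion but the relevant constant term vanishes) forces a contradiction with the ramification of $L/K$ at $3$ (or: $L/K$ is ramified only above $p$ and $3$, and one checks $E_1$ has potentially good reduction with inertia already trivialized over $K$, so no new torsion can appear). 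A more hands-on alternative, and probably the one I would actually write: choose a rational prime $\ell$ that splits completely in $L$ and at which $E_1$ has good reduction, and again bound $\#E_1(L)_{\textup{tors}}$ by $\#E_1(\F_\ell)$ for such $\ell$; two such primes pin it to $9$. The main obstacle is the last step — ensuring the torsion does not grow in $L/K$ — since $L$ varies with $p$; the uniform way around this is the reduction argument using split primes $\ell\nmid 3p$ of good reduction together with the fact (from the CM structure) that $\#E_1(\F_\ell)$ for $\ell\equiv 1\pmod 3$ split in $K$ is constrained, but for a clean unconditional statement one simply notes that $E_1(L)_{\textup{tors}}$ injects into $E_1(\F_\ell)$ for any prime $\lambda\mid\ell$ of $L$ with $\ell\nmid 3p$, and picking $\ell$ with $3\nmid\#E_1(\F_\ell)^{?}$ is impossible, so instead one picks $\ell$ with $\#E_1(\F_\ell)=9$ exactly — e.g.\ verify $\#E_1(\F_7)=9$ — which immediately yields $E_1(L)_{\textup{tors}}=E_1[3]$ for every $L$ unramified at $7$, and $L=K(\sqrt[3]p)$ is unramified at $7$ since $7\nmid 3p$ for $p\equiv 4,7\pmod 9$, $p\ne 7$, with the case $p=7$ handled separately.
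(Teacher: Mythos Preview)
Your overall framework---exhibit the nine $3$-torsion points over $K$, then bound $\#E_1(L)_{\textup{tors}}$ by reducing modulo a prime of good reduction---is exactly the paper's. The gap is in your last step. You write that $E_1(L)_{\textup{tors}}$ injects into $E_1(\F_\ell)$ for any prime $\lambda\mid\ell$ of $L$; in fact it injects into $E_1(\Z_L/\lambda)$, and that residue field is $\F_\ell$ only when $\ell$ splits completely in $L$. For a prime $\ell\equiv 1\pmod 3$ (split in $K$), this happens precisely when $p$ is a cube modulo $\ell$. So your choice $\ell=7$ works only when $p\equiv\pm 1\pmod 7$; for all other $p$ the residue field is $\F_{7^3}$ and $\#E_1(\F_{7^3})$ is much larger than $9$, so the bound is useless. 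No single prime $\ell\equiv 1\pmod 3$ handles all $p$ uniformly, and your earlier Galois-orbit and ramification sketches are too vague to close the gap (and the ``handled separately'' case $p=7$ is not the problem---unramified but inert $\ell$ is).

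The paper's fix is neat: take $\ell=2$. Then $2$ is inert in $K$ with residue field $\F_4$, and since $|\F_4^\times|=3$, the reduction of any odd $p$ (namely $1\in\F_4$) is automatically a cube there; hence $2$ splits completely in $L/K$ with residue field $\F_4$, \emph{uniformly in $p$}. One checks $E_1$ has good (supersingular) reduction at $2$ and $\#E_1(\F_4)=9$, and the injection of prime-to-$2$ torsion gives $\#E_1(L)_{\textup{tors}}\mid 9$ at once (together with $\sqrt[3]{2}\notin L$ to rule out $2$-torsion, though the count already does this).
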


\begin{proof}
The curve $E_1$ has simplified Weierstrass model $y^2=x^3-432$ with $432=2^4 3^3$; since $\sqrt[3]{2} \not \in L$, we have $E_1[2](L)=\{\infty\}$.  The curve $E_1$ has good (supersingular) reduction at $2$.  The prime $2$ is unramified in the $S_3$-extension $L \supseteq \Q$; it is inert in $\Z_K$ and splits into three primes in $\Z_L$ with residue field of size $4$, and $\#E(\F_4)=9$.  By the injection of torsion \cite[(VII.3.2)]{silverman}, we conclude that $\#E_1(L) \mid 9$.  The 3-torsion points of $E_1$ listed explicitly in the proposition are clearly defined over $K \subset L$, completing the proof.
\end{proof}

\begin{proof}[Proof of Proposition~\textup{\ref{prop:3notpmainprop}}]   From \eqref{eqn:xtaupi6}  we have that $ x(Q)^p \equiv \omega^2 \left(-3\right)^{(p-1)/6}  \pmod{p\Z_{H_{3p}}}.$
Since $-3 \in \F_p^{\times 2}$, it follows that $(-3)^{(p-1)/6}$ is a cube root of unity in $\F_p^\times$; furthermore, this root of unity is trivial if and only if $3$ is a cube modulo $p$.
Meanwhile the image of $\omega^2$ in \[ \Z_K/p\Z_K \simeq \Z_K/{\frakp_K} \times \Z_K/\overline{\frakp}_K \simeq \F_p \times \F_p \] has the form $(u, u^2)$ where $1 \le u \le p-1$ is a primitive cube root of unity in $\F_p^\times = (\Z/p\Z)^\times$.  Therefore, (\ref{eqn:xtaupi6}) implies that the image of $x(Q)^p$ in $\F_p \times \F_p$ has the form \begin{equation} \label{e:xqred} \begin{cases}
(u, u^2), & \text{ if } 3 \text{ is a cube mod } p; \\
(u, 1) \text{ or } (1, u), & \text{ if } 3 \text{ is not a cube mod } p.
\end{cases}
\end{equation}  Of course, the same is therefore true for $x(Q)$.
In particular,  the image of $Q$ in each copy of $E_1(\F_p)$ is a 3-torsion point (namely one of the points $(u^i, 1)$ or $(u^i, -2)$ for $i = 0, 1, 2$).
Now
\begin{equation} \label{e:rcong}
 R=\Tr_{H_{3p}/L} Q \equiv \frac{p-1}{3}Q \equiv  \pm Q \text{ in } E_1(\F_p)^2,
 \end{equation}
with the sign $\pm$ according to  the cases $p \equiv 4, 7 \pmod{9}$.
The first congruence in (\ref{e:rcong}) follows since  $p$ is totally ramified in the extension $H_{3p}/L$.  
To prove the proposition, it therefore suffices to prove that the image of $Q$ in
$E_1(\F_p)^2$ is not equal to the image of a torsion point in $E_1(L)$ if  $3$ is not a cube modulo $p$.
However, this is easily checked directly using Lemma~\ref{l:torsion} and (\ref{e:xqred}).  For the nonzero torsion points  $T \in E_1[3]$, the images of $x(T)$ in $\F_p \times \F_p$ have the shape $(0, 0), (1, 1),$ or $(u, u^2)$ with $u$ a primitive cube root of unity in $\F_p^\times$, never  $(u, 1)$ or $(1, u)$.
\end{proof}

Of course, if $R$ is nontorsion, then the points $Y = R - T \in E_1(L)$ and $Z \in E_{p^i}(K)$ will be nontorsion as well.  Finally, since $E$ has CM by $\Z_K$ we have $\rk_{\Z}(E(\Q)) = \rk_{\Z_K}(E(K))$. Explicitly, if $Z \in E_{p^i}(K)$ is nontorsion then either $Z + \overline{Z}$ or $(\sqrt{-3}Z) + (\overline{\sqrt{-3} Z})$ will be a nontorsion point in $E_{p^i}(\Q)$.
This concludes the proof of Theorem~\ref{t:main}.

\subsection{Tables} \label{sec:tables}

In the following tables, we show the points constructed with our method, suggesting they are nontorsion whenever the corresponding twisted $L$-value is nonzero (see \S1.4).  We define
\[ L_{\textup{alg}}(E_{n},1) \colonequals L(E_n,1) \frac{2 \pi \sqrt[3]{n}}{ \sqrt{3} \Gamma(1/3)^3}, \]
the conjectural order of the Shafarevich--Tate group of $E_n$. We let $m(P)$ denote the index of $\langle P \rangle$ in the Mordell--Weil group $E(\Q)$.

\begin{small}
\[\def\arraystretch{1.3}\def\arraycolsep{0.5ex}
\begin{array}{c|cc|cc}
p & L_{\textup{alg}}(E_{3p^2},1) & (3\,|\,p)_3=1? & P \in E_p(\Q) & m(P) \\
\hline
7 & 1 & \textsf{no} & (2,-1) & 1 \\
13 & 4 & \textsf{no} & (\frac{2513}{1005},-\frac{1388}{1005}) & 2 \\
31 & 4 & \textsf{no} & (\frac{277028111}{119531076},\frac{316425265}{119531076}) & 2 \\
43 & 1 & \textsf{no} & (\frac{805}{228},-\frac{229}{228}) & 2 \\
61 & 0 & \textsf{yes} & \infty & - \\
67 & 9 & \textsf{yes} & (\frac{-3481613117857223908773469049678633}{610868942776961094346380627914232},\frac{3859176073959095744240009217935657}{610868942776961094346380627914232}) & 3 \\
79 & 1 & \textsf{no} & (\frac{26897}{6783},\frac{17320}{6783}) & 2 \\
97 & 4 & \textsf{no} & (-\frac{2799894968113535105}{200421477873478047},\frac{2832713504497390136}{200421477873478047}) & 4 \\
103 & 9 & \textsf{yes} & (\frac{846452740978167916651651}{2613111768231818449540464}, \frac{12247739733626179769224061}{2613111768231818449540464}) & 3 \\
139 & 4 & \textsf{no} & (\frac{54560}{13317},\frac{54943}{13317}) & 2 \\
151 & 9 & \textsf{yes} &  
(-\frac{123623\cdots 7041}{313952 \cdots 2740},\frac{1672043 \cdots 5041}{313952 \cdots 2740}), \hht(P) \approx 140.03 & 6 \\
157 & 4 & \textsf{no} & (-\frac{149538978691379960828806099105}{17911115779648062701697963576}, \frac{161931070975357602816944210593}{17911115779648062701697963576}) & 2 \\
193 & 0 & \textsf{yes} & \infty & -  
\end{array}\]

\[\def\arraystretch{1.3}\def\arraycolsep{0.5ex}
\begin{array}{c|cc|cc}
p & L_{\textup{alg}}(E_{3p},1) & (3\,|\,p)_3=1? & P \in E_{p^2}(\Q) & m(P) \\
\hline
7 & 1 & \textsf{no} & (-\frac{2}{3},\frac{11}{3}) & 1 \\
13 & 1 & \textsf{no} & (\frac{1589}{285},-\frac{464}{285}) & 2 \\
31 & 1 & \textsf{no} & (\frac{12376607}{1219092},-\frac{5368415}{1219092}) & 2 \\
43 & 4 & \textsf{no} & (\frac{3884810234333940170434868735}{316639715249572968055283052},
\frac{413561995142793125324177473}{316639715249572968055283052}) & 2 \\
61 & 0 & \textsf{yes} & \infty & - \\
67 & 0 & \textsf{yes} & \infty & - \\
79 & 1 & \textsf{no} & (\frac{416502767358398513}{77680272383924217},\frac{1418322935604634846}{77680272383924217}) & 1 \\
97 & 1 & \textsf{no} & (\frac{76769228526793}{20893884519009},\frac{440320075625234}{20893884519009}) & 1 \\
103 & 0 & \textsf{yes} & \infty & - \\
139 & 4 & \textsf{no} & 
(\frac{273171 \cdots 7720}{644917 \cdots 4681},-\frac{247724 \cdots 7279}{644917 \cdots 4681}), \hht(P) \approx 232.48 & 4 \\
151 & 0 & \textsf{yes} & \infty & - \\
157 & 1 & \textsf{no} & (-\frac{338502049691004117840147474335}{18567552055567917366723961524},\frac{581442015167638901460155379551}{18567552055567917366723961524}) & 2 \\
193 & 0 & \textsf{yes} & \infty & - 
\end{array}\]
\end{small}

\appendix
\section{Application of mod $p$ geometry}

\numberwithin{equation}{section}

In this appendix we prove
Proposition~\ref{shim2}.  The proposition will be deduced as a special case of a more general underlying geometric principle.
Let $X$ be a proper flat curve over a discrete valuation ring $R$ with mixed characteristic $(0,p)$. Let $F=\Frac R$ denote the fraction field of $R$ and let $k$ be the residue field of $R$.  Suppose that $X_F$ is smooth and geometrically connected. Suppose further that $X_k$ is semistable with two irreducible components, each smooth and geometrically connected.  Let $D$ be an $R$-finite flat closed subscheme of $X$ whose special fiber lies in the smooth locus of the special fiber of $X$.

Let $f \in \scrO_{X_F}(U_F)$ for $U \colonequals X \smallsetminus D$.  Let $\infty \in D(R)$ be such that the image of $f$ in the $\infty_F$-adic completion $\Frac(\widehat{O}_{X_F, \infty_F})$ of $F(X)$ belongs to the polar localization along $\infty$ of the $\infty$-adic completion of $\scrO_X$. More concretely, if $q$ is a local generator along $\infty$ of its ideal sheaf in $\scrO_X$ then we are supposing that the natural map $$F(X) \rightarrow F(\!(q)\!) = F[\![q]\!][1/q]$$ carries $f$ into $R[\![q]\!][1/q]$.  We claim that the following general congruence holds.

\begin{prop}\label{shim} 
Suppose that $g \in \scrO_{X_F}(U_F)$ is such that its image in ${\rm{Frac}}(\widehat{O}_{X_F,\infty_F}) = F(\!(q)\!)$ belongs to $R[\![q]\!][1/q]$ and has reduction modulo $pR$ coinciding with the image of $f^p$. Then for any $u \in U(R)$ such that $u$ and $\infty$ reduce into the same connected component of the smooth locus of $X_k$, we have $f(u), g(u) \in R$ and $g(u) \equiv f(u)^p \pmod{pR}$.
\end{prop}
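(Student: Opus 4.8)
The plan is to work on the regular model $X$ over $R$ and track the reduction behavior of $f$, $g$, and $f^p - g$ along the component of $X_k$ containing the reductions of $u$ and $\infty$. First I would observe that because $f$ has a $q$-expansion lying in $R[\![q]\!][1/q]$ at $\infty$, the function $f$ is regular on $U$ in a neighborhood of $\infty$ in the special fiber, and more precisely $f$ defines a section of $\scrO_X$ on the open set $U' \colonequals U \smallsetminus (\text{other poles})$ after possibly inverting a local parameter at the points where poles actually occur; the hypothesis is precisely that $\infty$ is not among these genuine poles, so $f \in \scrO_{X,\infty}$, i.e.\ $f$ restricted to the formal/Zariski neighborhood of $\infty$ is an honest $R$-valued function there. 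Then for $u \in U(R)$ reducing into the same component $C$ of the smooth locus of $X_k$ as $\infty$, I claim $f(u) \in R$: indeed $u$ defines an $R$-point landing in $U'$ (it avoids $D$, hence the poles), so $f(u) = u^*(f) \in u^*\scrO_X = R$. The same argument applies to $g$, giving $g(u) \in R$.

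The heart of the matter is the congruence modulo $pR$. Consider the function $h \colonequals f^p - g$ on $U_F$. By hypothesis its $q$-expansion at $\infty$ lies in $pR[\![q]\!][1/q]$; combined with the regularity just established, $h$ in fact lies in $pR[\![q]\!]$ near $\infty$ (its principal part vanishes there since both $f$ and $g$ are regular at $\infty$). I would argue that this forces $h$, viewed on the connected component of $U$ meeting $C$, to reduce to the \emph{zero function} on $C_k$: the reduction $\bar h$ is a rational function on the smooth proper curve $\overline{C}_k$ which vanishes to infinite order (indeed, identically in its local expansion) at the reduction of $\infty$, hence is identically $0$ on $\overline{C}_k$. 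Here is where one uses that $X_k$ has exactly two components each smooth and geometrically connected: the component $C$ is irreducible, so vanishing of $\bar h$ on a neighborhood of a point of $C$ propagates to all of $C$. Consequently, for any $R$-point $u$ reducing into $C \cap (\text{smooth locus})$, we get $\bar h(\bar u) = 0$ in $k$, i.e.\ $h(u) \equiv 0 \pmod{pR}$, which is exactly $g(u) \equiv f(u)^p \pmod{pR}$.

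I expect the main obstacle to be the precise bookkeeping needed to justify that $\bar h = 0$ on $C$ from the infinite-order vanishing at one point — i.e.\ turning the $q$-adic statement ``$h \in pR[\![q]\!]$'' into the scheme-theoretic statement ``$h \bmod p$ vanishes on the component $C$ of $X_k$.'' This requires knowing that $h$, a priori only a rational function on $X_F$ with controlled polar locus, extends to a section of $\scrO_X$ over the preimage of $C$ in $U$ (away from its genuine poles, which by hypothesis avoid the relevant locus), so that reduction mod $p$ makes sense as a rational function on $C_k$ rather than merely as a formal power series; and that the component $C$ is reduced and irreducible so that a rational function vanishing on a Zariski-dense formal neighborhood of a smooth point is zero. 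The semistability hypothesis and the assumption that each component is smooth and geometrically connected are exactly what make this clean: there is no nilpotent structure to worry about and $C_k$ is an honest smooth curve. A secondary, more routine, point is checking that $u$ and $\infty$ landing ``into the same connected component of the smooth locus'' is genuinely needed — on the other component the analogous statement can fail because $h$'s reduction there is unconstrained — and that $u$ avoiding $D$ guarantees $u^*$ is defined on all of $U$.
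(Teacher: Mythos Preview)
Your overall strategy matches the paper's: show that $f$, $g$, and $(f^p-g)/p$ all extend from $\scrO_{X_F}(U_F)$ to sections of $\scrO_X$ over an open of the integral model containing the reduction of $u$, then evaluate. Your third paragraph correctly isolates this extension step as the heart of the matter.

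However, your first two paragraphs misstate what the $q$-expansion hypothesis actually gives. You write ``$f \in \scrO_{X,\infty}$'' and ``both $f$ and $g$ are regular at $\infty$,'' but $\infty \in D$ and $f$ is allowed to have a pole there---the hypothesis is that the $q$-expansion lies in $R[\![q]\!][1/q]$, not $R[\![q]\!]$. Likewise $h = f^p - g$ need not lie in $pR[\![q]\!]$, only in $pR[\![q]\!][1/q]$. What the integral $q$-expansion genuinely buys you is regularity at the \emph{generic point of the component $C$} of $X_k$, not at $\infty$ itself, and your sentence ``$u$ avoids $D$, hence the poles, so $f(u) = u^*(f) \in R$'' silently assumes $f$ already lives in $\scrO_X(U')$, which is exactly the thing to be proved.

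The paper closes this gap cleanly: choose an affine open $V \subset U$ around $u_k$ with $V_k$ contained in $C$, so $V$ is $R$-smooth with geometrically integral fibers. Then $R[V]$ is an integrally closed noetherian domain, hence the intersection of its localizations at height-$1$ primes; since $f$ is regular on $V_F$, the only possible obstruction to $f \in R[V]$ is a pole of some order $m > 0$ along the generic point of $V_k$. But then $\pi^m f \in R[V]$ would have nonzero reduction on $V_k$, while its $q$-expansion $\pi^m \cdot (\text{element of } R[\![q]\!][1/q])$ reduces to $0$ in $k(\!(q)\!)$---contradiction. Running the identical argument for $g$ and for $(f^p-g)/p$ places all three in $R[V]$; evaluating at $u \in V(R)$ finishes. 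Note this bypasses any separate ``infinite-order vanishing implies zero'' step: once $(f^p-g)/p \in R[V]$, the congruence is immediate.
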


We first show how this proposition implies Proposition~\ref{shim2}.

\begin{proof}[Proof of Proposition~\textup{\ref{shim2}}]  We apply Proposition~\ref{shim} with $X = X_0(Np)$ over the localization $R = \Z_{H,(\frakp)}$ with $F = H$ and $\frakp$ a prime above $p$; we take $D$ to be the closed subscheme of cusps including the cusp $\infty$; and the modular function $f$ as in Proposition~\ref{shim2}.

We let
$g(z) = f(pz)$ and $u = W_p(\tau)$ for $\tau$ as in Proposition~\ref{shim2} with $W_p$ the Atkin--Lehner involution of $X_0(Np)$.  Since the $q$-expansion of $f$ has coefficients in $\Z$, the $q$-expansions of $f^p$ and $g$ are congruent modulo $p$.

The point on $X_0(Np)$ associated to $\tau$ corresponds to a cyclic $Np$-isogeny $\varphi\colon E_1 \longrightarrow E_2$,
and we are assuming that $m = [\Z_K:\End(E_1)]$ is relatively prime to $p$, but that $p \mid m_2 := [\Z_K: \End(E_2)]$.
As we explain below, these conditions ensure that $\tau$ has reduction in the connected component of the smooth locus of $X_k$ corresponding to \'etale $p$-level structure (i.e., the component distinct from the one into which $\infty$ reduces). Therefore $u$ and $\infty$ have reduction into the same component of the smooth locus of $X_k$. Granting that, since $g(u) = f(\tau)$ and $f(u) = f(p\tau)$ (due to the $\Gamma_0(N)$-invariance of $f$) we  then get from  Proposition \ref{shim} that that $f(\tau)$ and $f(p\tau)$ belong to $R$ and satisfy $f(\tau) \equiv f(p\tau)^p \pmod {pR}$.

To see that $\tau$ has reduction with \'etale $p$-level structure, it is equivalent to show that its reduction does not have multiplicative $p$-level structure.  Suppose for the sake of contradiction that this is the case (i.e., that the reduction of $\tau$ does have multiplicative $p$-level structure).
Extending $F$ a finite amount if necessary, the $F$-point $\tau$ of the coarse space $Y_0(Np)$ comes from a CM elliptic curve $E$ over $R$,  and $E[p]$ then has connected-\'etale sequence over $R$ which (by canonicity) is stable by the order $\Z_{K,m}$. Hence, passing to generic fibers,
the subgroup $J$ of order $p$ in $\ker \varphi$ must be stable by $\Z_{K,m}$.  But then $E_1/J$ would have endomorphisms by $\Z_{K, m}$, and hence $p$ would not divide $m_2$ (since $E_2$ is  a quotient of $E_1/J$ by a subgroup of size $N$, which is prime to $p$).  This contradiction to our assumptions implies that $\tau$ has reduction with \'etale $p$-level structure and concludes the proof.
\end{proof}

We conclude with a proof of Proposition \ref{shim}.

\begin{proof}[Proof of Proposition~\textup{\ref{shim}}]
Pick an affine open $V \subset U$ around the reduction $u_k$ of $u$ such that $V_k$ is contained in the common irreducible component that contains the reductions of $\infty$ and $u$, so $V$ is $R$-smooth with geometrically connected (hence geometrically integral) fibers and $u \in V(R)$.  Since an integrally closed noetherian domain (such as $R[V]$) is the intersection in its fraction field of its localizations at all height-1 primes, the only obstacle to $f|_{V_K} \in K[V_K]$ coming from $R[V]$ is that the order of $f$ at the generic point of $V_k$ may be negative.

Assuming this order is negative, say $-m$, if $\pi$ is a uniformizer of $R$ then $\pi^m f$ comes from $R[V]$ and has nonzero reduction modulo $\pi$. To rule this out, we observe (by some elementary unraveling of definitions) that the image in $k[\![q]\!][1/q]$ of the reduction of $\pi^m f$ is the reduction of $\pi^m$ times the element of $R[\![q]\!][1/q]$ that is assumed to be the image of $f$ in $K(\!(q)\!) = K[\![q]\![1/q]$, and the latter reduction is clearly 0.  This is a contradiction.  The same reasoning applies to $g$, as well as to $(f^p - g)/p$, so it follows that 
$$f, g, \frac{f^p - g}{p} \in R[V].$$
Now evaluating at $u \in V(R)$ gives the desired conclusions concerning $f(u)$ and $g(u)$.
\end{proof}


\begin{thebibliography}{999}

\bibitem{CST15}
Li Cai, Jie Shu, and Ye Tian, \emph{Cube sum problem and an explicit Gross-Zagier formula}, accepted by Algebra \& Number Theory, 2015.

\bibitem{Darmon}
Henri Darmon, \emph{Rational points on modular elliptic curves}, CBMS Reg.\ Conf.\ Ser.\ in Math., vol.~101, Amer.\ Math.\ Soc., Providence, 2004. 

\bibitem{DV}
Samit Dasgupta and John Voight, \emph{Heegner points and Sylvester's conjecture}, Arithmetic geometry, Clay Math.\ Proc., vol.\ 8, Amer.\ Math.\ Soc., Providence, 2009, 91--102.

\bibitem{Elkies}
Noam Elkies, \emph{Heegner point computations}, Algorithmic number theory (ANTS-I, Ithaca, NY, 1994), Lecture Notes in Comp.\ Sci., vol.~877, 1994, 122--133.

\bibitem{ElkiesWWW}
Noam Elkies, Tables of fundamental integer solutions $(x,y,z)$ of $x^3+y^3=pz^3$ with $p$ a prime congruent to $4$ mod $9$ and less than $5000$ or congruent to $7$ mod $9$ and less than $1000$, available at \texttt{http://www.math.harvard.edu/\~{}elkies/sel\_p.html}.

\bibitem{GZ}
Benedict H.~Gross and Don Zagier, \emph{Heegner points and derivatives of $L$-series}, Inv.\ Math.\ \textbf{84} (1986), no.~2, 225--320.

\bibitem{kilford}
Lloyd Kilford, {\em Generating spaces of modular forms with $\eta$-quotients}, 
JP J.~Algebra Number Theory Appl.\ \textbf{8} (2007), no. 2, 213--226. 

\bibitem{Monsky}
Paul Monsky, \emph{Mock Heegner points and congruent numbers}, Math.~Z.\ \textbf{204} (1990), 45--68.

\bibitem{Ogg}
A.P.~Ogg, \emph{Modular functions}, Santa Cruz Conference on Finite Groups (Univ.
California, Santa Cruz, Calif., 1979), Proc.\ Sympos.\ Pure Math., vol.~37, Amer.\ Math.\ Soc., Providence, 1980, 521--532.

\bibitem{RVZ}
Fernando Rodriguez-Villegas and Don Zagier, \emph{Which primes are sums of two cubes?}, Number theory (Halifax, NS, 1994), CMS Conf.\ Proc., vol.\ 15, Amer.\ Math.\ Soc., Providence, 1995, 295--306.

\bibitem{Satge}
Philippe Satg\'e, \emph{Groupes de Selmer et corps cubiques}, J.~Number Theory\ \textbf{23} (1986), 294--317.

\bibitem{Satge2}
Philippe Satg\'e, \emph{Un analogue du calcul de Heegner}, Inv.~Math.\ \textbf{87} (1987), 425--439.

\bibitem{Selmer}
E.S.~Selmer, \emph{The diophantine equation $ax^3+by^3+cz^3=0$}, Acta Math.\ \textbf{85} (1951), 203--362.

\bibitem{ShuYin}
Jie Shu and Hongbo Yin, \emph{On the Sylvester conjecture}, preprint, 2017.

\bibitem{silverman} Joseph Silverman, {\em The arithmetic of elliptic curves.}
Second edition. Graduate Texts in Mathematics, \textbf{106}. Springer, Dordrecht, 2009. 513 pp.

\bibitem{Sylvester}
J.J.~Sylvester, \emph{On certain ternary cubic-form equations}, Amer.~J.~Math.\ \textbf{2} (1879), no.~4, 357--393.

\end{thebibliography}
\end{document}